\documentclass[review]{article}

\usepackage{lineno,hyperref}
\modulolinenumbers[5]

\usepackage{amsmath, amsthm, amssymb}
\usepackage[ansinew]{inputenc}
\usepackage{amsfonts}
\usepackage{amssymb}
\usepackage{enumitem}
\usepackage{amsthm}
\usepackage{dsfont}
\usepackage{natbib}
\bibpunct[, ]{[}{]}{,}{n}{,}{,}
\makeatother
\topmargin -1cm
\theoremstyle{plain}
\oddsidemargin1cm
\evensidemargin1cm
\textheight 8.5in
\textwidth 6in
\DeclareMathAlphabet{\mathbbmsl}{U}{bbm}{m}{sl}

\def\min{\mathop{\rm min}}

\def\d{{\bf d}}

\def\Rr{\mathbb{R}^n}

\def\min{\mathop{\rm min}}

\def\R{\mathbb{R}^{n\times n}}

\def\x{{\bf{x}}}
\def\y{{\bf{y}}}

\newtheorem{theorem}{Theorem}[section]
\newtheorem{lemma}[theorem]{Lemma}
\newtheorem{corollary}[theorem]{Corollary}
\newtheorem{proposition}[theorem]{Proposition}
\newtheorem{example}[theorem]{Example}

\newtheorem{definition}{Definition}
\bibliographystyle{plain}
\begin{document}
	\begin{center}
		\large{\bf Generalizations of $R_0$ and SSM properties; Extended Horizontal Linear Complementarity Problem}
	\end{center}\vspace{1.5mm}
	\begin{center}

		\textsc{Punit Kumar Yadav}\\
		Department of Mathematics\\ Malaviya National Instiute of Technology, Jaipur, 302017, India\\
		E-mail address: punitjrf@gmail.com\\
		\textsc{K. Palpandi}\\
		Department of Mathematics\\ Malaviya National Instiute of Technology, Jaipur, 302017, India\\
		E-mail address: kpalpandi.maths@mnit.ac.in
	\end{center}
\begin{abstract}
	In this paper, we first introduce  $R_0$-$W$ and ${\bf SSM}$-$W$ property for the set of matrices which is a generalization of $R_0$ and the strictly semimonotone matrix. We then prove some existence results for the extended horizontal linear complementarity problem when the involved matrices have these properties. With an additional condition on the set of matrices,  we prove that the ${\bf SSM}$-$W$ property is equivalent to the unique solution for the corresponding extended horizontal linear complementarity problems. Finally, we give a necessary and sufficient condition for the connectedness of the solution set of the extended horizontal linear complementarity problems.
\end{abstract}

\section{Introduction} 
The standard linear complementarity problem (for short LCP),  LCP($C,q$), is to find  vectors $x,y$ such that \begin{equation}
x\in \Rr, ~y=Cx + q\in\Rr ~\text{and}~ x\wedge y = 0,\end{equation} where $C\in \R,~q\in \Rr$ and $'\wedge'$ is a min map. The LCP has numerous applications in numerous domains, such as optimization, economics, and game theory. Cottle and Pang's monograph \cite{LCP} is the primary reference for standard LCP. Various generalisations of the linear complementarity problem have been developed and discussed in the literature during the past three decades (see, \cite{n2n,elcp,telcp,hlcp,hlcpm,rhlcp}). The extended horizontal linear complementarity problem is one of the most important extensions of LCP, which various authors have studied; see \cite{PP0,exis,n2n} and references therein. For a given ordered set of matrices ${\bf C}:=\{C_0,C_1,...,C_k\} \subseteq \mathbb{R}^{n \times n}$, vector $q\in \mathbb{R}^n$ and ordered set of positive vectors ${\bf d}:=\{d_{1},d_{2},...,d_{k}\} \subseteq \mathbb{R}^n $, the extended horizontal linear complementarity problem (for short EHLCP), denoted by EHCLP(${\bf C},{\bf d},q$), is to find a vector $x_{0},x_{1},...,x_{k} \in \mathbb{R}^n$ such that \begin{equation}\label{e1}
\begin{aligned}
	C_0 x_{0}=&q+\sum_{i=1}^{k} C_ix_{i},\\
	x_{0}\wedge x_{1}=0 ~~\text{and} ~~ (d_{j}-&x_{j})\wedge x_{j+1}=0, ~1\leq j\leq k-1.\\
\end{aligned}
\end{equation} 
If $k=1$, then EHLCP becomes the horizontal linear complementarity problem (for short HLCP), that is,  \begin{equation*}
\begin{aligned}
	C_0 x_{0}-C_1x_{1}=q~~\text{and}~~x_{0}\wedge x_{1}=0.
\end{aligned}
\end{equation*} 
Further, HLCP reduces to the standard LCP by taking $C_0 =I$. Due to its widespread applications in numerous domains, the horizontal linear complementarity problem has received substantial research attention from many academics; see \cite{hlcp,hlcpm,rhlcp, homo} and reference therein.

Various writers have presented new classes of matrices for analysing the structure of LCP solution sets in recent years; see for example, \cite{LCP, fvi,PP0}.
The classes of  $R_0$, $P_0$,  $P$, and strictly semimonotone (SSM) matrices play a crucial role in the existence and uniqueness of the solution to LCP. For instance, $P$ matrix (if [$x\in\Rr,x*Ax\leq 0\implies x=0$]) gives a necessary and sufficient condition for the uniqueness of the solution for the LCP (see, Theorem 3.3.7 in \cite{LCP}). To get a similar type of existence and uniqueness results for the generalized LCPs, the notion of $P$  matrix was extended for the set of matrices as the column $W$-property by Gowda et al. \cite{PP0}. They proved that column $W$-property gives the solvability and the uniqueness for the extended horizontal linear complementarity problem (EHLCP). Also, they have generalized the concept of the  $P_0$-matrix as the column $W_0$-property.  

Another class of matrix, the so-called SSM matrix,  has importance in LCP theory. This class of matrices provides a unique solution to LCP on $\Rr_+$ and also gives the existence of the solution for the LCP (see, \cite{LCP}). For a $Z$ matrix (if all the off-diagonal entries of a matrix are non-positive), $P$ matrix is equivalent to the SSM matrix (see, Theorem 3.11.10 in \cite{LCP}). A natural question arises whether the SSM matrix can be generalized for the set of matrices in the view of EHLCP and whether we have a similar equivalence relation for the set of $Z$ matrices. In this paper, we would like to answer this question.

The connectedness of the solution set of LCP has a prominent role in the study of the LCP. We say a matrix is connected if the solution set of the corresponding LCP is connected. 
In \cite{ctd}, Jones and Gowda addressed the connectedness of the solution set of the LCP. They proved that the matrix is connected whenever the given matrix is a $P_0$ matrix and the solution set has a bounded connected component. Also, they have shown that if the solution set of LCP is connected, then there is almost one solution of LCP for all $q>0.$ Due to the specially structured matrices involved in the study of the connectedness of the solution to LCP, various authors studied the connectedness of LCP, see for example \cite{ctd,ctdl,Cntd}. The main objectives of this paper are to answer the following questions:
\begin{itemize}
\item[(Q1)]  In LCP theory, it is a well-known result that the $R 0$ matrix gives boundedness to the LCP solution set. The same holds true for HLCP \cite{szn}. This motivates the question of whether or not the notion of $R_0$ matrix can be generalized to the set of matrices. If so, then can we expect the same kind of outcome in the EHLCP?
\item [(Q2)] Given that a strictly semimonotone matrix guarantees the existence of the LCP solution and its uniqueness for $q\geq 0$, it is natural to wonder whether the concept of SSM matrix can be extended to the set of matrices. If so, then whether the same result holds true for EHLCP.
\item [(Q3)] Motivated by the results of Gowda and Jones \cite{ctd} regarding the connectedness of the solution set of LCP, one can ask whether the solution set of EHLCP is connected if the set of matrices has the column $W_0$ property and the solution set of the corresponding EHLCP has a bounded connected component.
\end{itemize}
The paper's outline is as follows: We present some basic definitions and results in section 2. We generalize the concept of $R_0$ matrix and prove the existence result for EHLCP in section 3. In section 4, we introduce the {\bf SSM}-$W$ property, and we then study an existence and uniqueness result for the EHLCP when the underlying set of matrices have this property. In the last section, we give a necessary and sufficient condition for the connectedness of the solution set of the EHLCP.

\section{Notations and Preliminaries}
\subsection{Notations}Throughout this paper, we use the following notations: \begin{itemize}
\item[(i)] The  $n$ dimensional Euclidean space with the usual inner product will be denoted by $\mathbb{R}^n$. The set of all non-negative vectors (respectively, positive vectors) in $\mathbb{R}^n$ will be denoted by $\mathbb{R}^n_+$ (respectively, $\mathbb{R}^n_{++}$ ). We say $x \geq 0$ (respectively, $ >0$) if and only if $x\in\mathbb{R}^n_+$ (respectively, $\mathbb{R}^n_{++})$.
\item [(ii)] The $k$-ary Cartesian power of $\Rr$ will be denoted by $\Lambda^{(k)}_n$ and the $k$-ary Cartesian power of $\Rr_{++}$ will be denoted by $\Lambda^{(k)}_{n,++}$. The bold zero '${\bf 0}$' will be used for denoting the  zero vector $(0,0,...,0)\in \Lambda^{(k)}_n.$ 
\item [(iii)] The set of all $n\times n$ real matrices will be denoted by $\R$. We use the symbol $\Lambda^{(k)}_{n\times n}$ to denote the $k$-ary Cartesian product of  $\R$. 
\item [(iv)] We use $[n]$ to denote the set $\{1,2,...,n\}$.
\item [(v)] Let $M\in\R$. We use $\text{diag}(M)$ to denote  the vector $(M_{11},M_{22},...,M_{kk})\in \Rr$, where $M_{ii}$ is the $ii^{\rm th}$ diagonal entry of matrix $M$ and  $\text{det}(M)$ is used to denote the determinant of matrix $M$.
\item[(vi)] SOL(${\bf C}, \d, q$) will be used for denoting the set of all solution to EHLCP(${\bf C},\d,q$).
\end{itemize} 
We now recall some definitions and results from the LCP theory, which will be used frequently in our paper.

\begin{proposition}[\cite{wcp}]\label{star}
Let $V=\mathbb{R}^n.$ Then, the following statements are equivalent.
\begin{itemize}
	\item [\rm(i)] $x\wedge y=0.$
	\item [\rm(ii)] $x,y\geq 0$ and $~x*y=0,$ where $*$ is the Hadamard product.
	\item[\rm(iii)] $x,y\geq 0~\text{and}~\langle x,y\rangle=0.$
\end{itemize}	\end{proposition}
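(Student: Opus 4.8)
The plan is to prove the chain of implications (i)$\Rightarrow$(ii)$\Rightarrow$(iii)$\Rightarrow$(i), reducing each to a coordinatewise statement. The whole content is elementary; the only point requiring a moment's care is that the condition in (iii) is a single scalar equation $\langle x,y\rangle=0$ rather than $n$ separate equations, so nonnegativity must be used to split it back into coordinates. Throughout I write $x_i,y_i$ for the $i$-th components and work with $i\in[n]$.

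For (i)$\Rightarrow$(ii): fix $i$. From $\min(x_i,y_i)=0$ we first deduce $x_i\geq 0$ and $y_i\geq 0$, since a negative component would force the minimum to be strictly negative; hence $x,y\geq 0$. Next, $\min(x_i,y_i)=0$ means $x_i=0$ or $y_i=0$, so $x_iy_i=0$ for every $i$, i.e. $x*y=0$.

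For (ii)$\Rightarrow$(iii): the inequalities $x,y\geq 0$ are inherited verbatim, and $\langle x,y\rangle=\sum_{i=1}^{n}x_iy_i=\sum_{i=1}^{n}(x*y)_i=0$.

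For (iii)$\Rightarrow$(i): this is where the one genuine step sits. Since $x,y\geq 0$, each summand $x_iy_i$ is nonnegative, and a finite sum of nonnegative reals vanishes only if every term vanishes; therefore $x_iy_i=0$ for all $i$. Combined with $x_i\geq 0$ and $y_i\geq 0$, this yields $\min(x_i,y_i)=0$ for each $i$, i.e. $x\wedge y=0$. I do not anticipate any real obstacle: the argument is wholly elementary, and the only things to watch are invoking nonnegativity at the right places, namely to pass from a nonnegative minimum to coordinatewise nonnegativity in (i)$\Rightarrow$(ii), and to decouple the scalar product in (iii)$\Rightarrow$(i).
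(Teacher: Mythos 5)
Your argument is correct and complete: the paper states this proposition as a known result cited from the literature and gives no proof of its own, and your componentwise chain (i)$\Rightarrow$(ii)$\Rightarrow$(iii)$\Rightarrow$(i) is the standard argument one would supply. The one step you rightly flag as needing care --- using nonnegativity to split the single scalar equation $\langle x,y\rangle=0$ back into the $n$ equations $x_iy_i=0$ --- is handled properly.
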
 
\begin{definition}[\cite{PP0}]\rm
Let ${\bf C}=(C_0,C_1,...,C_k)\in\Lambda^{(k+1)}_{n\times n}$. Then a matrix $R\in\R$ is column representative of ${\bf C}$ if $$R._j\in\big\{(C_0)._j,(C_1)._j,...,(C_k)._j\big\},~\forall j\in[n],$$
where $R._j$ is the $j^{{\rm th}}$ column of matrix $R.$
\end{definition}
Next, we define the column W-property.
\begin{definition}[\cite{PP0}] \rm
Let  ${\bf C}:=(C_0,C_1,...,C_k)\in\Lambda^{(k+1)}_{n\times n}$. Then we say that ${\bf C}$ has the 
\begin{itemize}
	\item[\rm (i)] {\it column $W$-property} if the determinants of all the column representative matrices of ${\bf C}$ are all positive or all negative.
	\item[\rm (ii)] {\it column $W_0$-property} if there exists ${\bf N}:=(N_0,N_1,...,N_k)\in \Lambda^{(k+1)}_{n\times n}$ such that ${\bf C+\epsilon N}:=(C_0+ \epsilon N_0,C_1+\epsilon N_1,...,C_k+\epsilon N_k)$ has the column $W$-property for all $\epsilon>0$.
\end{itemize}  
\end{definition}
Due to Gowda and Sznajder \cite{PP0}, we have the following result.
\begin{theorem}[\cite{PP0}] \label{P1}
For ${\bf C}=(C_0,C_1,...,C_k)\in\Lambda^{(k+1)}_{n\times n}$, the following are equivalent:\begin{itemize}
	
	\item[\rm(i)]${\bf C}$ has the column $W$-property.
	\item[\rm(ii)] For arbitrary non-negative diagonal matrices $D_{0},D_{1},...,D_{k}\in\R$
	with $\text{\rm diag}(D_{0}+D_{1}+D_{2}+...+D_{k})>0$,
	$$\text{\rm det}\big(C_0D_{0}+C_1D_{1}+...+C_kD_{k}\big)\neq 0.$$
	\item [\rm(iii)]$C_0$ is invertible and  $(I,C_0^{-1}C_1,...,C_0^{-1}C_k)$ has the column $W$-property.
	\item [\rm(iv)]  For all $q\in\Rr$ and $\d\in\Lambda^{(k-1)}_{n,++}$, {\rm EHLCP}$({\bf C},\d,q)$ has a unique solution. \end{itemize}
\end{theorem}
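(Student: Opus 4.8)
The plan is to separate the purely linear-algebraic equivalences (i)$\Leftrightarrow$(ii)$\Leftrightarrow$(iii) from the complementarity-theoretic equivalence (i)$\Leftrightarrow$(iv). For (i)$\Leftrightarrow$(ii), given non-negative diagonal matrices $D_0,\dots,D_k$ with $\mathrm{diag}(D_0+\cdots+D_k)>0$, expand $\det(C_0D_0+\cdots+C_kD_k)$ multilinearly in its columns to get $\sum_{\sigma}\big(\prod_{\ell=1}^{n}(D_{\sigma(\ell)})_{\ell\ell}\big)\det R^{\sigma}$, where $\sigma$ ranges over the selection maps $[n]\to\{0,\dots,k\}$ and $R^{\sigma}$ is the column representative matrix of ${\bf C}$ whose $\ell$-th column is $(C_{\sigma(\ell)})_{.\ell}$. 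If (i) holds, say with all $\det R^{\sigma}>0$, then every term is $\ge 0$ and choosing for each $\ell$ an index with positive diagonal entry produces a strictly positive term, so the sum is nonzero, which is (ii); conversely, $0$--$1$ diagonal matrices realize each $R^{\sigma}$ as a particular $C_0D_0+\cdots+C_kD_k$, so (ii) forces $\det R^{\sigma}\neq 0$, and the fact that all these determinants share one sign follows by joining any two of them through the path $t\mapsto S_t$, where $S_t$ is the matrix whose $\ell$-th column is $(1-t)(C_{\sigma(\ell)})_{.\ell}+t(C_{\tau(\ell)})_{.\ell}$; this $S_t$ is again of the form $\sum_i C_i D_i(t)$ with $\mathrm{diag}(\sum_i D_i(t))=(1,\dots,1)$, so by (ii) its determinant never vanishes on $[0,1]$ and hence never changes sign. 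For (i)$\Leftrightarrow$(iii), observe that $C_0$ is itself a column representative matrix, so (i) gives $\det C_0\neq 0$; the factorization $\det(\sum_i C_i D_i)=\det(C_0)\,\det\big(D_0+\sum_{i\ge 1}(C_0^{-1}C_i)D_i\big)$ then shows that ${\bf C}$ satisfies (ii) if and only if $(I,C_0^{-1}C_1,\dots,C_0^{-1}C_k)$ does, and applying the equivalence (i)$\Leftrightarrow$(ii) to this latter tuple rewrites the condition precisely as (iii).

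For (i)$\Leftrightarrow$(iv), fix $\d$. The crux is to parametrize, coordinate by coordinate, the set of tuples $(x_0,\dots,x_k)$ satisfying the chain of complementarity relations in~\eqref{e1}. By Proposition~\ref{star}, in coordinate $\ell$ those relations trace out a one-dimensional ``staircase'': $(x_0)_\ell$ increases from $0$ while all later coordinates vanish, then $(x_1)_\ell$ increases from $0$ to $(d_1)_\ell$ while $(x_0)_\ell=0$, then $(x_2)_\ell$ increases from $0$ to $(d_2)_\ell$ while $(x_1)_\ell=(d_1)_\ell$, and so on, ending with $(x_k)_\ell$ increasing without bound. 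This arc is the image of an explicit piecewise-linear homeomorphism of $\mathbb{R}$, so, taking the product over $\ell$, the set of tuples satisfying the complementarity relations is the image of a piecewise-linear homeomorphism $s\mapsto(x_0(s),\dots,x_k(s))$ of $\mathbb{R}^n$ onto it. Substituting into the linear equation of~\eqref{e1}, {\rm EHLCP}$({\bf C},\d,q)$ becomes $\Phi_{\d}(s)=q$, where $\Phi_{\d}(s):=C_0x_0(s)-\sum_{i=1}^{k}C_ix_i(s)$ is piecewise-linear on the product grid in $\mathbb{R}^n$ cut out by the finitely many hyperplanes $\{s_\ell=\text{const}\}$; differentiating, one finds that on the cell where each $s_\ell$ lies in its $i(\ell)$-th step the Jacobian of $\Phi_{\d}$ is exactly the column representative matrix whose $\ell$-th column is $(C_{i(\ell)})_{.\ell}$. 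Hence the column $W$-property of ${\bf C}$ is precisely the statement that $\Phi_{\d}$ is coherently oriented (all its linear pieces have Jacobian determinants of one common sign). When this holds, $\Phi_{\d}$ is affine with nonsingular linear part on each of the finitely many cells, hence proper, and a Brouwer degree argument---nonvanishing Jacobians give local injectivity, and the common sign makes the degree $\pm1$ at every point---shows $\Phi_{\d}$ is a bijection, hence a homeomorphism; conversely, if $\Phi_{\d}$ is a homeomorphism it is injective, and injectivity across any hyperplane $\{s_\ell=\text{const}\}$ separating two adjacent cells forces their Jacobian determinants to have equal sign, so by connectedness of the grid all pieces share one sign. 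Since $s\mapsto(x_0(s),\dots,x_k(s))$ is a bijection onto the set cut out by the complementarity relations and the column $W$-property is independent of $\d$, this establishes (i)$\Leftrightarrow$(iv).

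The main obstacle is this last step: setting up the coordinatewise ``staircase'' parametrization carefully enough to read off the Jacobian identity, and, above all, justifying the piecewise-linear dichotomy ``coherently oriented $\Leftrightarrow$ global homeomorphism,'' whose forward direction rests on properness together with Brouwer degree and whose converse rests on the local-injectivity-across-walls computation. Once the multilinear expansion is written down, the equivalences (i)--(iii) are, by contrast, routine.
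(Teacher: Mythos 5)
The paper itself contains no proof of this statement: Theorem \ref{P1} is imported verbatim from Sznajder and Gowda \cite{PP0}, so your attempt can only be measured against that source. Your handling of (i)$\Leftrightarrow$(ii)$\Leftrightarrow$(iii) is correct and is essentially the standard argument: the multilinear column expansion of $\det(C_0D_0+\cdots+C_kD_k)$ over selection maps, the $0$--$1$ diagonal matrices to recover individual representatives, the convex path $S_t$ to transport the sign between any two representatives, and the factorization $\det(\sum_i C_iD_i)=\det(C_0)\det(D_0+\sum_{i\ge1}C_0^{-1}C_iD_i)$. The coordinatewise staircase parametrization of the complementarity chain, the identification of the Jacobian on each grid cell with ($\pm$) a column representative matrix, and the direction (iv)$\Rightarrow$(i) via injectivity across the walls of adjacent cells are also sound.

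The genuine gap is in (i)$\Rightarrow$(iv). The dichotomy you lean on, ``coherently oriented $\Rightarrow$ global homeomorphism,'' is \emph{false} for general piecewise-linear maps of $\mathbb{R}^n$: subdivide $\mathbb{R}^2$ into six sectors of $60^\circ$ and map the $j$-th pair of boundary rays at angles $60j^\circ,\,60(j+1)^\circ$ linearly to the rays at $120j^\circ,\,120(j+1)^\circ$; the pieces agree on shared rays, every piece has determinant $1$, yet the map winds twice around the origin and is nowhere near injective. Consequently neither of your two supporting claims holds as stated: nonvanishing Jacobians of the pieces do \emph{not} give local injectivity at points of the $(n-1)$-skeleton where several cells meet (the counterexample fails local injectivity at the origin), and the common sign of the Jacobians only shows that the degree equals $\pm$(the number of preimages of a generic value); nothing in your argument pins that number to $1$. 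That the dichotomy \emph{is} true for this particular class of grid-subdivision normal-type maps is exactly the hard content of the theorem (Kuhn--L\"owen for $k=1$, Sznajder--Gowda in general) and requires its own proof. Your framework can be completed: establish $\deg=1$ by exhibiting one right-hand side whose unique solution lies in the interior of a cell (e.g.\ $q$ with $C_0^{-1}q>0$, where $(C_0^{-1}q,0,\dots,0)$ is shown to be the only solution by an ${\bf SSM}$-type argument as in Theorem \ref{smq}, available since the column $W$-property implies the ${\bf SSM}$-$W$ property by Theorem \ref{T4}), and then treat non-generic values by checking that the local degree at each point of the finite fibre is at least $1$ and summing. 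As written, however, the crucial implication is asserted rather than proved.
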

If $k=1$ and $C_0^{-1}$ exists, then   HLCP($C_0,C_1,q$) is equivalent to LCP($C_0^{-1}C_1,C_0^{-1}(q)$). In this case, $C_0^{-1}C_1$ is a $P$  matrix if and only if for all $q\in\Rr$,  LCP($C_0^{-1}C_1,C_0^{-1}(q)$) has a unique solution (see, Theorem 3.3.7 in \cite{LCP}). Hence we have the following theorem given the previous theorem.
\begin{theorem}[\cite{PP0}]\label{C1}
Let $(C_0,C_1)\in\Lambda^{(2)}_{n\times n}$. Then the following are equivalent.
\begin{itemize}
	\item [\rm(i)]  $(C_0,C_1)$ has the column $W$-property.
	\item [\rm(ii)] $C_0$ is invertible and $C_0^{-1}C_1$ is a $P$  matrix.
	\item [\rm(iii)]  For all $q\in\Rr$, {\rm HLCP}$(C_0,C_1,q)$ has a unique solution.
\end{itemize}
\end{theorem}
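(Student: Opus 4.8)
The plan is to deduce the theorem from Theorem~\ref{P1} specialized to $k=1$, together with one elementary determinant computation that identifies the column $W$-property of a pair $(I,M)$ with the $P$-property of $M$.

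First I would settle (i)\,$\Leftrightarrow$\,(iii). When $k=1$ the defining system~\eqref{e1} of $\mathrm{EHLCP}({\bf C},\d,q)$ carries no constraint of the form $(d_j-x_j)\wedge x_{j+1}=0$, because the index range $1\le j\le k-1$ is empty; it therefore collapses to $C_0x_0-C_1x_1=q$, $x_0\wedge x_1=0$, that is, to $\mathrm{HLCP}(C_0,C_1,q)$. Moreover the parameter set $\Lambda^{(k-1)}_{n,++}=\Lambda^{(0)}_{n,++}$ is a one-point set (the empty tuple), so quantifying over $q\in\Rr$ \emph{and} $\d\in\Lambda^{(0)}_{n,++}$ amounts to quantifying over $q\in\Rr$ alone. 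Hence, for $k=1$, clause (iv) of Theorem~\ref{P1} reads exactly as clause (iii) above, and (i)\,$\Leftrightarrow$\,(iii) follows at once.

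For (i)\,$\Leftrightarrow$\,(ii) I would invoke clause (iii) of Theorem~\ref{P1} with $k=1$: $(C_0,C_1)$ has the column $W$-property if and only if $C_0$ is invertible and $(I,M)$ has the column $W$-property, where $M:=C_0^{-1}C_1$. It then remains to verify the following elementary fact: \emph{for $M\in\R$, the pair $(I,M)$ has the column $W$-property if and only if $M$ is a $P$ matrix.} A column representative matrix $R$ of $(I,M)$ is specified by the set $\alpha\subseteq[n]$ consisting of those indices $j$ for which the $j$th column of $R$ is taken from $M$; for $j\notin\alpha$ the $j$th column of $R$ is the standard basis vector $e_j$. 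Expanding $\det R$ in cofactors successively along the columns $e_j$, $j\notin\alpha$ --- each step contributing the factor $1$ with a plus sign, since the unique nonzero entry of $e_j$ lies on the diagonal --- gives $\det R=\det M_{\alpha\alpha}$, the principal minor of $M$ with row and column index set $\alpha$ (with the usual convention $\det M_{\emptyset\emptyset}=1$). As $R$ ranges over all column representative matrices of $(I,M)$, the set $\alpha$ ranges over all subsets of $[n]$, and the choice $\alpha=\emptyset$ gives $R=I$ with $\det R=1>0$. Therefore the column $W$-property of $(I,M)$ is equivalent to $\det M_{\alpha\alpha}>0$ for every $\alpha\subseteq[n]$, which is precisely the condition that $M$ be a $P$ matrix. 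This yields (i)\,$\Leftrightarrow$\,(ii).

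As an alternative to this last step one may obtain (ii)\,$\Leftrightarrow$\,(iii) directly once $C_0$ is known to be invertible: $(x_0,x_1)$ solves $\mathrm{HLCP}(C_0,C_1,q)$ iff $x:=x_1$ and $y:=C_0^{-1}C_1x+C_0^{-1}q$ solve $\mathrm{LCP}(C_0^{-1}C_1,C_0^{-1}q)$, and $C_0^{-1}q$ sweeps out all of $\Rr$ as $q$ does, so by Theorem~3.3.7 in~\cite{LCP} unique solvability for all right-hand sides is equivalent to $C_0^{-1}C_1$ being a $P$ matrix. The only place that requires a little care anywhere in this argument is the sign bookkeeping in the cofactor expansion; I do not anticipate any real obstacle.
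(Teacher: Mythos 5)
Your argument is correct, and for the equivalence (i)$\Leftrightarrow$(iii) it coincides with the paper's: both specialize clause (iv) of Theorem~\ref{P1} to $k=1$, noting that the $\d$-constraints are vacuous. Where you diverge is in linking (ii) to the other two clauses. The paper goes through solvability: it observes that when $C_0$ is invertible, $\mathrm{HLCP}(C_0,C_1,q)$ is equivalent to $\mathrm{LCP}(C_0^{-1}C_1,C_0^{-1}q)$, and then cites Theorem~3.3.7 of \cite{LCP} to get (ii)$\Leftrightarrow$(iii) --- exactly the route you sketch in your final paragraph as an ``alternative.'' Your primary route instead proves (i)$\Leftrightarrow$(ii) by a direct determinant computation: the cofactor identity $\det R=\det M_{\alpha\alpha}$ for column representatives of $(I,M)$, combined with clause (iii) of Theorem~\ref{P1}, and this is sound (the sign bookkeeping works because each deleted column $e_j$ has its nonzero entry on the diagonal). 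What your route buys is a purely algebraic, self-contained identification of the column $W$-property of $(I,M)$ with the positivity of all principal minors of $M$, bypassing LCP solvability theory entirely; what it costs is a tacit appeal to the classical Fiedler--Pt\'{a}k equivalence between ``all principal minors positive'' and the sign-nonreversal definition of a $P$ matrix that the paper actually states ($x*Ax\le 0\Rightarrow x=0$). That equivalence is standard, so this is not a gap, but if you wanted to match the paper's stated definition of $P$ exactly you would either cite it or fall back on your alternative LCP argument, which is the paper's own.
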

\subsection{Degree theory} 
We now recall the definition and some properties of a degree from \cite{fvi,deg} for our discussion. 

Let $\Omega$ be an open bounded set in $\Rr$. Suppose $h:\bar{\Omega}\rightarrow \Rr$ is a continuous  function and a vector $p\notin h(\partial\Omega)$, where $\partial\Omega$ and $\bar{\Omega}$ denote the  boundary and closure of $\Omega$, respectively. Then the degree of $h$ is defined with respect to $p$ over  $\Omega$  denoted by $\text{deg}(h,\Omega,p).$ The equation $h(x)=p$ has a solution whenever $\text{deg}(h,\Omega,p)$ is non-zero. If $h(x)=p$ has only one solution, say $y$ in $\Rr$, then the degree is the same overall bounded open sets containing $y$. This common degree is denoted by $\text{deg}(h,p)$.
\subsubsection{Properties of the degree} The following properties are used frequently here.
\begin{itemize}
\item[(D1)] deg($I,\Omega,\cdot)=1$, where $I$ is the identity function.
\item [(D2)]    {\bf Homotopy invariance}: Let a homotopy $\Phi(x,s):\Rr\times[0,1]\rightarrow \Rr $  be continuous. 
If the zero set of $\Phi(x,s),~X=\{x:\Phi(x,s)={0}~\text{for some}~s\in[0,1]\}$ is bounded, then for any bounded open set $\Omega$ in $\Rr$ containing the zero set $X$, we have $$\text{deg}(\Phi(x,1),\Omega,{ 0})=\text{deg}(\Phi(x,0),\Omega,{0}).$$
\item[(D3)] {\bf Nearness property}: Assume $\text{deg}(h_1(x),\Omega,p)$ is defined and $h_2:{\bar\Omega}\rightarrow \Rr$ is a continuous function. If 
$\text{sup}\displaystyle_{x\in\Omega}\| h_2(x)-h_1(x)\|<\text{dist}(p,\partial\Omega)$, then $\text{deg}(h_2(x),\Omega,p)$ is defined and equals to $\text{deg}(h_1(x),\Omega,p)$.
\end{itemize}
The following result from Facchinei and Pang  \cite{fvi} will be used later.
\begin{proposition}[\cite{fvi}]\label{ND}
Let $\Omega$ be a non-empty, bounded open subset of $\Rr$
and let $\Phi:\bar{\Omega}\rightarrow \Rr$ be a continuous injective mapping.  Then $\text{\rm deg}(\Phi,\Omega,p)\neq0$ for all
$p\in\Phi(\Omega)$.
\end{proposition}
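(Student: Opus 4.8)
The plan is to deduce this from the composition (multiplication) theorem for the topological degree, after using the elementary fact that a continuous injection defined on a compact set is a homeomorphism onto its image. Fix $p\in\Phi(\Omega)$. By injectivity there is a unique $x_0\in\Omega$ with $\Phi(x_0)=p$, and $p\notin\Phi(\partial\Omega)$ (again by injectivity, since $x_0\notin\partial\Omega$), so $\mathrm{deg}(\Phi,\Omega,p)$ is defined.

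First I would note that $\Phi:\bar{\Omega}\to\Phi(\bar{\Omega})$ is a homeomorphism, being a continuous bijection from a compact space onto a Hausdorff space; let $\Psi:\Phi(\bar{\Omega})\to\bar{\Omega}$ be its continuous inverse, and extend $\Psi$ by the Tietze extension theorem to a continuous map $\widehat\Psi:\Rr\to\Rr$. Then $\widehat\Psi\circ\Phi$ is the identity on $\bar{\Omega}$, and since $x_0\notin(\widehat\Psi\circ\Phi)(\partial\Omega)=\partial\Omega$, property (D1) gives $\mathrm{deg}(\widehat\Psi\circ\Phi,\Omega,x_0)=\mathrm{deg}(I,\Omega,x_0)=1$.

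Next I would apply the multiplication theorem for the degree to the pair $(\Phi,\widehat\Psi)$. Writing $\{K_i\}$ for the bounded connected components of $\Rr\setminus\Phi(\partial\Omega)$ (on each of which $\mathrm{deg}(\Phi,\Omega,\cdot)$ is constant, with common value denoted $\mathrm{deg}(\Phi,\Omega,K_i)$), and noting that each $\mathrm{deg}(\widehat\Psi,K_i,x_0)$ is defined because $\partial K_i\subseteq\Phi(\partial\Omega)$ and $\widehat\Psi(\Phi(\partial\Omega))=\Psi(\Phi(\partial\Omega))=\partial\Omega\not\ni x_0$, the theorem yields
\[
1=\mathrm{deg}(\widehat\Psi\circ\Phi,\Omega,x_0)=\sum_i \mathrm{deg}(\Phi,\Omega,K_i)\,\mathrm{deg}(\widehat\Psi,K_i,x_0),
\]
a finite sum. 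The crucial observation is that only the component $K_{i_0}$ containing $p$ contributes. Indeed, fix $i$ and a point $y_i\in K_i$: if $\mathrm{deg}(\Phi,\Omega,y_i)=0$ the term vanishes; if $\mathrm{deg}(\Phi,\Omega,y_i)\neq0$ and $K_i\neq K_{i_0}$, then $K_i\subseteq\Phi(\Omega)$ (a nonzero degree produces a solution, and the degree is constant on $K_i$), hence $\overline{K_i}\subseteq K_i\cup\Phi(\partial\Omega)\subseteq\Phi(\bar{\Omega})$; since $p\notin K_i$ and $p\notin\Phi(\partial\Omega)$ we get $p\notin\overline{K_i}$, so $x_0=\Psi(p)\notin\Psi(\overline{K_i})=\widehat\Psi(\overline{K_i})$ by injectivity of $\Psi$, whence $\mathrm{deg}(\widehat\Psi,K_i,x_0)=0$. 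Consequently $1=\mathrm{deg}(\Phi,\Omega,p)\,\mathrm{deg}(\widehat\Psi,K_{i_0},x_0)$, which forces $\mathrm{deg}(\Phi,\Omega,p)\neq0$ — in fact $\pm1$.

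The main obstacle is this last step: checking the hypotheses of the multiplication theorem and, above all, eliminating the spurious components $K_i\neq K_{i_0}$. One must resist the temptation to evaluate the factor $\mathrm{deg}(\widehat\Psi,K_{i_0},x_0)$ directly, since that is exactly the statement being proved applied to $\Psi$; the point is that the product formula forces the product to equal $1$, hence each factor to be nonzero, without any such evaluation. I would also mention the shorter but essentially equivalent route via Brouwer's invariance of domain, which shows directly that $\Phi(\Omega)$ is open and that $\mathrm{deg}(\Phi,\Omega,\cdot)$ is a nonzero constant on it.
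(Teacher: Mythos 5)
The paper does not prove this proposition at all: it is imported verbatim from Facchinei and Pang \cite{fvi} (Proposition 2.1.3 there) and used as a black box, so there is no internal proof to compare yours against. Your argument is a correct, self-contained derivation and is essentially the classical one: reduce to the identity via the inverse homeomorphism $\Psi$ (Tietze-extended), and let the multiplication theorem force $\deg(\Phi,\Omega,p)\cdot\deg(\widehat\Psi,K_{i_0},x_0)=1$. The hypothesis checks are right ($p\notin\Phi(\partial\Omega)$ by injectivity, $\widehat\Psi(\Phi(\partial\Omega))=\partial\Omega\not\ni x_0$, $\partial K_i\subseteq\Phi(\partial\Omega)$), and your elimination of the components $K_i\neq K_{i_0}$ via $x_0\notin\widehat\Psi(\overline{K_i})$ is the genuinely delicate step and is handled correctly. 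Two small remarks. First, note that your proof invokes the multiplication theorem and the Tietze extension theorem, neither of which is among the degree properties (D1)--(D3) the paper records; that is legitimate here precisely because the paper itself outsources this proposition to \cite{fvi}, but it means the proof is not derivable from the paper's stated toolkit alone. Second, you pass from the sum $\sum_i\deg(\Phi,\Omega,K_i)\deg(\widehat\Psi,K_i,x_0)=1$ to the single term for $K_{i_0}$ without explicitly ruling out that $K_{i_0}$ is the unbounded component of $\mathbb{R}^n\setminus\Phi(\partial\Omega)$ (in which case it would not appear in the sum at all); this is harmless, since then every term of the sum would vanish by your own argument, contradicting the value $1$, but the case deserves a sentence. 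Your closing observation is also apt: the shortest route is invariance of domain, which gives that $\deg(\Phi,\Omega,\cdot)$ is a nonzero constant ($\pm1$) on the open connected set $\Phi(\Omega)$, and the product-formula proof you give is in effect one standard way of proving invariance of domain.
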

\noindent{\bf Note}: All the degree theoretic results and concepts are also applicable over any finite dimensional Hilbert space (like $\Rr$ or $\Rr\times\Rr\times\Rr$ etc).
\section{$R_0$-$W$ property} In this section, we first define the $R_0$-$W$ property for the set of matrices which is a natural generalization of $R_0$ matrix in the LCP theory. We then show that the $R_0$-$W$ property gives the boundedness of the solution set of the corresponding EHLCP.
\begin{definition}\rm
Let ${\bf C}=(C_0,C_1,...,C_k) \in \Lambda^{(k+1)}_{n\times n}$.  We say that ${\bf C}$ has the $R_0$-$W$ {\it property} if the system 
$$C_0x_{0}=\sum_{i=1}^{k} C_ix_{i}~\text{and}~x_{0}\wedge x_{j}=0 ~~\forall~j\in [k]$$ has only zero solution.

\end{definition}
It can be seen easily that the $R_0$-$W$ property coincides with $R_0$ matrix when $k=1$ and $C_0 =I$.  Also it is noted (see, \cite{wcp}) that if $k=1$, then the $R_0$-$W$ property referred as $R_0$ pair. To proceed further, we prove the following result.
\begin{lemma}\label{l1}   Let ${\bf C}=(C_0,C_1,...,C_k) \in \Lambda^{(k+1)}_{n\times n}$ and $\x =(x_{0},x_{1},...,x_{k})\in\text{\rm SOL}({\bf C},\d,q)$. Then $\x$ satisfies the following system $$C_0x_{0}=q+\sum_{i=1}^{k} C_ix_{i}~\text{and}~x_{0}\wedge x_{j}=0~\forall~j\in [k].$$  
\end{lemma}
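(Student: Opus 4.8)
The plan is to show that a solution $\x=(x_0,x_1,\dots,x_k)$ of $\text{EHLCP}({\bf C},\d,q)$ automatically satisfies the simpler complementarity relations $x_0\wedge x_j=0$ for all $j\in[k]$, so that only the ``linear'' equation $C_0x_0=q+\sum_{i=1}^k C_ix_i$ (which is already part of the EHLCP definition) needs to be combined with them. Thus the entire content is to pass from the chained conditions $(d_j-x_j)\wedge x_{j+1}=0$, $1\le j\le k-1$, together with $x_0\wedge x_1=0$, to $x_0\wedge x_j=0$ for every $j$. The main device is Proposition~\ref{star}, which lets us translate each wedge condition into nonnegativity plus orthogonality (equivalently, a Hadamard product being zero).

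First I would record what the hypotheses give componentwise. From $x_0\wedge x_1=0$ we get $x_0\ge 0$, $x_1\ge 0$, and $x_0\ast x_1=0$; in particular, for each index $\ell\in[n]$, either $(x_0)_\ell=0$ or $(x_1)_\ell=0$. From $(d_1-x_1)\wedge x_2=0$ we get $d_1-x_1\ge 0$ (hence $0\le x_1\le d_1$, and since $d_1>0$ this forces $x_1\ge 0$ anyway), $x_2\ge 0$, and $(d_1-x_1)\ast x_2=0$; so for each $\ell$, either $(x_1)_\ell=(d_1)_\ell$ or $(x_2)_\ell=0$. Iterating, for each $j$ with $1\le j\le k-1$ and each $\ell$, either $(x_j)_\ell=(d_j)_\ell>0$ or $(x_{j+1})_\ell=0$, and all the $x_j$ are nonnegative.

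Now fix $j\in\{2,\dots,k\}$ and an index $\ell$; I claim $(x_0)_\ell (x_j)_\ell=0$. Suppose $(x_j)_\ell\ne 0$. Walking the chain backwards: $(x_j)_\ell\ne 0$ forces $(x_{j-1})_\ell=(d_{j-1})_\ell>0$ (from $(d_{j-1}-x_{j-1})\wedge x_j=0$); then $(x_{j-1})_\ell\ne 0$ forces $(x_{j-2})_\ell=(d_{j-2})_\ell>0$; continuing down to $j=1$ we obtain $(x_1)_\ell=(d_1)_\ell>0$, hence $(x_1)_\ell\ne 0$, and then $x_0\ast x_1=0$ gives $(x_0)_\ell=0$. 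Hence in all cases $(x_0)_\ell(x_j)_\ell=0$, i.e. $x_0\ast x_j=0$; combined with $x_0\ge 0$ and $x_j\ge 0$ (already established), Proposition~\ref{star} yields $x_0\wedge x_j=0$. The case $j=1$ is immediate from the hypothesis. Since the equation $C_0x_0=q+\sum_{i=1}^k C_ix_i$ is part of being a solution, the asserted system holds.

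I do not anticipate a serious obstacle here; the only point requiring a little care is the backward induction along the chain and the bookkeeping of which of the two alternatives ``$(x_j)_\ell=(d_j)_\ell$ or $(x_{j+1})_\ell=0$'' is active, together with using $d_j>0$ to rule out the degenerate coincidence $(d_j)_\ell=0$. Everything else is a direct application of Proposition~\ref{star}.
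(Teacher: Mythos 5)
Your proof is correct and follows essentially the same route as the paper: both arguments chain the conditions $(d_j-x_j)\wedge x_{j+1}=0$ together with $x_0\wedge x_1=0$ and exploit $d_j>0$ to deduce $x_0\ast x_j=0$, then invoke Proposition~\ref{star}. The only (immaterial) difference is direction: the paper walks the chain forward, showing every $x_j$ vanishes on the support of $x_0$, whereas you walk it backward from an index where $(x_j)_\ell\neq 0$ to conclude $(x_0)_\ell=0$.
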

\begin{proof}
As $x_{0}\geq0$, there exists an index set $\alpha \subseteq [n]$ such that $(x_{0})_i=\begin{cases}
	>0 & i\in \alpha\\
	0 & i\in [n]\setminus \alpha
\end{cases}.$  Since $x_{0} \wedge x_{1}=0$, we have $(x_{1})_i=0$ for all $i\in \alpha$. From  $(d_{1}-x_{1})\wedge x_{2}=0$, we get $(d_{1})_i (x_{2})_i =0~\forall i\in \alpha$. This gives that $(x_{2})_i =0~\forall i\in \alpha$. By substituting $(x_{2})_i =0~\forall i\in \alpha$ in $(d_{2}-x_{2})\wedge x_{3}=0$, we obtain $(x_{3})_i =0~\forall i\in \alpha $. Continue the process in the similar way, one can get $(x_{4})_i =(x_{5})_i=...=(x_{k})_i=0~\forall i\in \alpha$. So,  $x_{0}\wedge x_{j}=0~ \forall ~j \in[k]$. This completes the proof.  
\end{proof}
We now prove the boundedness of the solution set of EHLCP when the involved set of matrices has the $R_0$-$W$ property.
\begin{theorem}\label{R_0}
Let ${\bf C}=(C_0,C_1,...,C_k) \in \Lambda^{(k+1)}_{n\times n}$. If ${\bf C}$ has the  $R_0$-$W$ property then  $\text{\rm SOL}({\bf C},\d,q)$ is bounded for every $q\in\mathbb{R}^n$ and $\d \in  \Lambda^{(k-1)}_{n,++}$.
\end{theorem}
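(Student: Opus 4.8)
The plan is to argue by contradiction using a standard normalization-and-limit (or "recession direction") argument, exactly as one does for $R_0$ matrices in classical LCP theory. Suppose for some fixed $q \in \mathbb{R}^n$ and $\mathbf{d} \in \Lambda^{(k-1)}_{n,++}$ the solution set $\mathrm{SOL}(\mathbf{C},\mathbf{d},q)$ is unbounded. Then there is a sequence $\x^{(m)} = (x_0^{(m)}, x_1^{(m)}, \dots, x_k^{(m)}) \in \mathrm{SOL}(\mathbf{C},\mathbf{d},q)$ with $t_m := \|\x^{(m)}\| \to \infty$. By Lemma \ref{l1}, each $\x^{(m)}$ satisfies $C_0 x_0^{(m)} = q + \sum_{i=1}^k C_i x_i^{(m)}$ together with $x_0^{(m)} \wedge x_j^{(m)} = 0$ for all $j \in [k]$. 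Normalize: set $\y^{(m)} := \x^{(m)}/t_m$, so $\|\y^{(m)}\| = 1$, and pass to a convergent subsequence $\y^{(m)} \to \y = (y_0, y_1, \dots, y_k)$ with $\|\y\| = 1$.

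The key step is to check that the limit $\y$ solves the homogeneous system defining the $R_0$-$W$ property, i.e. $C_0 y_0 = \sum_{i=1}^k C_i y_i$ and $y_0 \wedge y_j = 0$ for all $j \in [k]$. Dividing the linear relation by $t_m$ gives $C_0 y_0^{(m)} = q/t_m + \sum_{i=1}^k C_i y_i^{(m)}$; letting $m \to \infty$ and using $q/t_m \to 0$ yields $C_0 y_0 = \sum_{i=1}^k C_i y_i$ by continuity. For the complementarity conditions, I would use the characterization in Proposition \ref{star}: $x_0^{(m)} \wedge x_j^{(m)} = 0$ is equivalent to $x_0^{(m)} \geq 0$, $x_j^{(m)} \geq 0$, and $\langle x_0^{(m)}, x_j^{(m)}\rangle = 0$. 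Each of these is preserved under positive scaling (so holds for $\y^{(m)}$) and under taking limits (nonnegativity is a closed condition, and the bilinear form is continuous), hence $y_0 \geq 0$, $y_j \geq 0$, $\langle y_0, y_j\rangle = 0$, which is again $y_0 \wedge y_j = 0$. Thus $\y$ solves the $R_0$-$W$ system.

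By the $R_0$-$W$ property, the only solution of that system is $\y = \mathbf{0}$, contradicting $\|\y\| = 1$. Therefore $\mathrm{SOL}(\mathbf{C},\mathbf{d},q)$ is bounded.

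I do not expect a serious obstacle here; the argument is routine once Lemma \ref{l1} is invoked to strip away the "staircase" complementarity conditions $(d_j - x_j)\wedge x_{j+1} = 0$ and replace them with the cleaner conditions $x_0 \wedge x_j = 0$, which behave well under scaling (note that the original conditions involving $d_j$ are \emph{not} scale-invariant, which is precisely why Lemma \ref{l1} is needed as a preliminary reduction). The only points requiring a word of care are the passage to a convergent subsequence (fine, since the unit sphere in the finite-dimensional space $\Lambda^{(k+1)}_n$ is compact) and the observation that homogenizing the affine constraint kills the $q$ term in the limit.
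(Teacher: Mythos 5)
Your proposal is correct and follows essentially the same argument as the paper: contradiction via an unbounded sequence of solutions, reduction to the conditions $x_0\wedge x_j=0$ through Lemma \ref{l1}, normalization and passage to a convergent subsequence on the unit sphere, and the limit yielding a nonzero solution of the homogeneous $R_0$-$W$ system. Your added remarks on why the complementarity conditions survive scaling and limits (via Proposition \ref{star}) only make explicit what the paper leaves implicit.
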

\begin{proof} Suppose there exist $q\in\mathbb{R}^n$ and $\d=(d_{1}, d_{2},...,d_{k-1})\in  \Lambda^{(k-1)}_{n,++}$ such that $\text{SOL}({\bf C},\d,q)$ is unbounded. Then there exists a sequence ${\bf x}^{(m)}=( {x^{(m)}_{0}},{x^{(m)}_{1}},...,{x^{(m)}_{k}})$ in $\Lambda^{(k+1)}_n$ such that $||{\bf x}^{(m)}||  \to \infty $ as $m\to \infty$ and it satisfies
\begin{equation}\label{bound}
	\begin{aligned}
		~~~~& C_0 {x^{(m)}_{0}} =q+\sum_{i=1}^{k} C_i {x^{(m)}_{i}} \\ 
		~~~~& {x^{(m)}_{0}} \wedge {x^{(m)}_{1}}=0 ~~\text{and}~~ (d_{j}-{x^{(m)}_{j}})\wedge {x^{(m)}_{j+1}}=0 ~\forall j\in[k-1].
	\end{aligned}
\end{equation}
From the Lemma \ref{l1}, equation \ref{bound} gives that
\begin{equation}\label{bd1}
	\begin{aligned}
		C_0 {x^{(m)}_{0}} =&q+\sum_{i=1}^{k} C_i {x^{(m)}_{i}}  ~~\text{and}~~
		{x^{(m)}_{0}}  \wedge {x^{(m)}_{j}} =&0 ~\forall j\in[k].\\
	\end{aligned}
\end{equation} 
As $\dfrac{{\bf x}^{(m)}}{\|{\bf x}^{(m)}\|}$ is a unit vector for all $m$,  $\dfrac{{\bf x}^{(m)}}{\|{\bf x}^{(m)}\|}$ converges to some vector $\y=(  y_{0},y_{1},...,y_{k}) \in \Lambda^{(k+1)}_{n}$ with $||\y||=1$. 
Now first divide the equation \ref{bd1} by $\|{\bf x}^{(m)}\|$ and then take the limit $m\rightarrow \infty$,  we get 	$$   C_0 y_{0}=\sum_{i=1}^{k} C_i y_{i} ~~\text{and}~~ y_{0}\wedge y_{j}=0 ~\forall j\in[k].$$ This implies that $\y$ must be a zero vector as ${\bf C}$ has the $R_0$-$W$ property,  which contradicts the fact that $||\y||=1$. Therefore $\text{SOL}({\bf C},\d,q)$ is bounded.
\end{proof}
\subsection{Degree of EHLCP}
Let ${\bf C}=(C_0,C_1,...,C_k)\in \Lambda^{(k+1)}_{n \times n}$ and $\d=(d_{1}, d_{2}, ...., d_{k-1})\in \Lambda^{(k-1)}_{n,++}$. We define a function $F:\Lambda^{(k+1)}_n \to \Lambda^{(k+1)}_n $ as \begin{equation}\label{e1}
\begin{aligned}
	F(\x)=\begin{bmatrix}
		C_0 x_{0} -\sum_{i=1}^{k} C_ix_{i}\\ x_{0}\wedge x_{1}\\ (d_{1}-x_{1})\wedge x_{2}\\ (d_{2}-x_{2})\wedge x_{3}\\ .\\ .\\ .\\ (d_{k-1}-x_{k-1})\wedge x_{k}\\
	\end{bmatrix}.\end{aligned}\end{equation}
We denote the degree of $F$ with respect to ${\bf 0}$ over bounded open set $\Omega \subseteq  \Lambda^{(k+1)}_n$ as $\rm{deg}({\bf C},\Omega,{\bf 0})$. It is noted that if ${\bf C}$ has the  $R_0$-$W$ property, in view of the Lemma \ref{l1}, $F(\x)={\bf 0} \Leftrightarrow {\x}={\bf 0}$ which implies that $\text{deg}({\bf C},\Omega,{\bf 0})=\text{deg} ({\bf C},{\bf 0})$ for any bounded open set $\Omega$ contains the origin in $\Lambda^{(k+1)}_n$. We call this degree as EHLCP-degree of ${\bf C}.$ 

We now prove an existence result for EHLCP.
\begin{theorem}\label{P2}
Let ${\bf C}=(C_0,C_1,...,C_k)\in \Lambda^{(k+1)}_{n\times n}$. Suppose the following hold:
\begin{itemize}
	\item[\rm (i)] ${\bf C}$ has the $R_0$-$W$ property.
	\item[\rm (ii)] ${\rm{deg}}({\bf C},{\bf 0})\neq 0$.
\end{itemize}
Then {\rm EHLCP(${\bf C},\d,q$)} has non-empty compact solution for all $q\in\Rr$ and $\d \in \Lambda^{(k-1)}_{n, ++}$.
\end{theorem}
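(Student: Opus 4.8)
The plan is to combine the boundedness from Theorem~\ref{R_0} with a homotopy argument in degree theory. Fix $q\in\Rr$ and $\d\in\Lambda^{(k-1)}_{n,++}$. The natural homotopy is $\Phi(\x,s)=F_{sq}(\x)$, where $F_{sq}$ is the map in \eqref{e1} but with the first block replaced by $C_0x_0-\sum_{i=1}^k C_ix_i - sq$; equivalently, $\Phi(\x,s)$ connects the map $F$ associated to the homogeneous system (at $s=0$) to the map associated to EHLCP$({\bf C},\d,q)$ (at $s=1$). The zero set of this homotopy is $\{\x : \x\in\text{SOL}({\bf C},\d,sq)\ \text{for some}\ s\in[0,1]\}$.

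The first key step is to show this zero set is bounded. Here I would mimic the proof of Theorem~\ref{R_0}: if it were unbounded, pick $\x^{(m)}$ with $\|\x^{(m)}\|\to\infty$ solving EHLCP$({\bf C},\d,s_mq)$ for some $s_m\in[0,1]$; pass to a subsequence with $s_m\to s^*\in[0,1]$ and $\x^{(m)}/\|\x^{(m)}\|\to\y$, $\|\y\|=1$. Applying Lemma~\ref{l1} to each $\x^{(m)}$, dividing by $\|\x^{(m)}\|$ and letting $m\to\infty$ kills the $s_m q$ term (since $s_m q/\|\x^{(m)}\|\to 0$) and also turns each complementarity block into $y_0\wedge y_j=0$, exactly as in Theorem~\ref{R_0}; so $\y$ solves the homogeneous $R_0$-$W$ system, forcing $\y={\bf 0}$, a contradiction. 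Hence the zero set $X$ is bounded, and we may choose a bounded open set $\Omega\subseteq\Lambda^{(k+1)}_n$ containing $X$.

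The second step is the degree computation. By homotopy invariance (D2), $\text{deg}(\Phi(\cdot,1),\Omega,{\bf 0})=\text{deg}(\Phi(\cdot,0),\Omega,{\bf 0})$. The right-hand side is $\text{deg}({\bf C},\Omega,{\bf 0})$ for the homogeneous map $F$; since ${\bf C}$ has the $R_0$-$W$ property, Lemma~\ref{l1} gives $F(\x)={\bf 0}\Leftrightarrow\x={\bf 0}$, so this equals the EHLCP-degree $\text{deg}({\bf C},{\bf 0})$, which is nonzero by hypothesis~(ii). Therefore $\text{deg}(\Phi(\cdot,1),\Omega,{\bf 0})\neq 0$, and the equation $\Phi(\x,1)={\bf 0}$ has a solution in $\Omega$; by Lemma~\ref{l1} this solution solves EHLCP$({\bf C},\d,q)$, so the solution set is non-empty. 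Compactness is immediate: it is closed (as the zero set of the continuous map $F$ with the $q$-shift) and bounded by Theorem~\ref{R_0}.

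I expect the main obstacle to be the boundedness of the homotopy zero set, i.e.\ handling the parameter $s$ uniformly: one must verify that Lemma~\ref{l1} applies verbatim for every $s_m$ (it does, since the lemma's argument only uses the complementarity conditions, not the linear equation), and that the normalized limit $\y$ genuinely satisfies the $R_0$-$W$ system rather than an $s^*$-perturbed version — which works precisely because the inhomogeneous term $s_mq$ is bounded while $\|\x^{(m)}\|\to\infty$. Once this uniform boundedness is in place, the degree-theoretic bookkeeping is routine given properties (D2) and the definition of the EHLCP-degree.
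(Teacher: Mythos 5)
Your proposal is correct and follows essentially the same route as the paper: the same homotopy $\Phi(\x,s)$ with the $-sq$ term, boundedness of the zero set via the normalization argument of Theorem~\ref{R_0} (which the paper only sketches and you rightly fill in, noting that Lemma~\ref{l1} uses only the complementarity conditions), then homotopy invariance (D2) and the nonvanishing EHLCP-degree to get existence, with compactness from closedness plus Theorem~\ref{R_0}.
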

\begin{proof}
As the solution set of EHLCP is closed, it is enough to prove that the solution set is non-empty and bounded.  We first define a homotopy $\Phi: \Lambda_n^{(k+1)} \times [0,1] \to \Lambda_n^{(k+1)}$  as  $$\Phi(\x,s)=\begin{bmatrix}
	
	C_0 x_{0} -\sum_{i=1}^{k} C_ix_{i}-sq\\ x_{0}\wedge x_{1}\\ (d_{1}-x_{1})\wedge x_{2}\\ (d_{2}-x_{2})\wedge x_{3}\\ .\\ .\\ .\\ (d_{k-1}-x_{k-1})\wedge x_{k}\\
\end{bmatrix}.$$ Then, $$\Phi(\x,0)=F(\x)~~\text{and}~\Phi(\x,1)=F(\x)- \hat{q}, \text{where}~~\hat{q}=(q,0,0,...0)\in \Lambda^{(k+1)}_n.$$
By using the similar argument as in above Theorem $\ref{R_0}$, we can easily show that the zero set of homotopy, $X=\{\x:\Phi(\x,s)={\bf 0}~\text{for some}~s\in[0,1]\}$ is bounded. From the property of degree (D2), we get 
$\text{deg}(F,\Omega,{\bf 0})=\text{deg}(F-\hat{q},\Omega,{\bf 0})$ for any open bounded set $\Omega$ containing $X$. As $\text{deg}(F, \Omega,{\bf 0})=\text{deg}({\bf C},{\bf 0})\neq 0$, we obtain $\text{deg}(F-\hat{q},\Omega,{\bf 0})\neq 0$ which implies $\text{SOL}({\bf C},\d,q) $ is non-empty. As ${\bf C}$ has the $R_0$-$W$ property, by Theorem \ref{bound}, $\text{SOL}({\bf C},\d,q) $ is bounded. This completes the proof. 
\end{proof}
\section{ ${\bf SSM}$-$W$ property}
In this section, we first define the ${\bf SSM}$-$W$ {\it property} for the set of matrices which is a generalization of the SSM matrix in the LCP theory, and we then prove that the existence and uniqueness result for the EHLCP when the involved set of matrices have the ${\bf SSM}$-$W$ property.

We now recall that an $n\times n$  real matrix $M$ is called strictly semimonotone (SSM) matrix if [$x\in \Rr_+,~x*Mx\leq 0\Rightarrow x=0$]. We generalize this concept to the set of matrices.
\begin{definition}\rm 
We say that ${\bf C}=(C_0,C_1,...,C_k)\in \Lambda^{(k+1)}_{n\times n}$ has the ${\bf SSM}$-$W$ property if 
\begin{equation*}
	\{C_0x_{0}=\sum_{i=1}^{k} C_ix_{i},~x_{i}\geq 0~\text{and}~~ x_{0}* x_{i}\leq 0~~\forall i\in[k]\}\Rightarrow {\x}=(x_0,x_1,..,x_k)={\bf 0}.
\end{equation*}
\end{definition}
We prove the following result.
\begin{proposition}\label{P2}
Let ${\bf C}=(C_0,C_1,...,C_k)\in \Lambda^{(k+1)}_{n\times n}$. If ${\bf C}$ has the ${\bf SSM}$-$ W$ property, then the followings hold: 
\begin{itemize}
	\item[\rm (i)] $C_0^{-1}$ exists and $C_0^{-1}C_i$ is a strict semimonotone matrix for all $i\in[k].$
	\item[\rm(ii)] $(I,C_0^{-1} C_1,...,C_0^{-1}C_k)$ has the  ${\bf SSM}$-$W$ property.
	\item [\rm(iii)] $(P^TC_0P,P^TC_1P,...,P^TC_kP)$ has the ${\bf SSM}$-$ W$ property for any permutation matrix $P$ of order $n$.
\end{itemize} 
\end{proposition}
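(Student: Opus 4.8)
The plan is to deduce all three items from the defining implication of the ${\bf SSM}$-$W$ property of ${\bf C}$ by substituting suitably constructed tuples into it.

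For (i), I would first show $C_0$ is invertible: if $C_0x_0=0$, then the tuple $(x_0,0,\dots,0)$ satisfies $C_0x_0=\sum_{i=1}^kC_i\cdot 0$ together with the vacuous sign conditions $x_i\ge 0$ and $x_0*x_i\le 0$, so the ${\bf SSM}$-$W$ property forces $x_0=0$; thus $C_0^{-1}$ exists. Next fix $i\in[k]$ and let $u\ge 0$ satisfy $u*(C_0^{-1}C_i)u\le 0$. Applying the ${\bf SSM}$-$W$ property to the tuple with $x_i=u$, $x_0=C_0^{-1}C_iu$, and $x_j=0$ for $j\in[k]\setminus\{i\}$ does the job: the relation $C_0x_0=C_iu=\sum_{j=1}^kC_jx_j$ holds by construction, each $x_j\ge 0$ ($j\in[k]$), $x_0*x_i=(C_0^{-1}C_iu)*u\le 0$, and $x_0*x_j=0$ for $j\ne i$. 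Hence $u=0$, i.e. $C_0^{-1}C_i$ is strictly semimonotone.

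For (ii), suppose $x_0=\sum_{i=1}^k(C_0^{-1}C_i)x_i$ with $x_i\ge 0$ and $x_0*x_i\le 0$ for all $i\in[k]$; left-multiplying by $C_0$ gives $C_0x_0=\sum_{i=1}^kC_ix_i$, so the tuple satisfies the hypotheses of the ${\bf SSM}$-$W$ property of ${\bf C}$ and $\x={\bf 0}$. For (iii), I would use the elementary facts that for any permutation matrix $P$ one has $(Pu)*(Pv)=P(u*v)$ and $Pu\ge 0\iff u\ge 0$. If $(y_0,\dots,y_k)$ satisfies $P^TC_0Py_0=\sum_{i=1}^kP^TC_iPy_i$, $y_i\ge 0$, $y_0*y_i\le 0$ for all $i\in[k]$, set $x_j:=Py_j$; left-multiplying the linear relation by $P$ gives $C_0x_0=\sum_{i=1}^kC_ix_i$, and the two facts convert the sign conditions on the $y_j$ into $x_i\ge 0$ and $x_0*x_i\le 0$. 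The ${\bf SSM}$-$W$ property of ${\bf C}$ then yields $\x={\bf 0}$, hence $y_j=P^Tx_j=0$ for all $j$.

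There is no genuine obstacle here; the only steps that need a bit of care are the bookkeeping in (i) — padding the tuple with zeros in the slots $j\ne i$ so that it actually solves $C_0x_0=\sum_{i=1}^kC_ix_i$ while keeping the Hadamard sign conditions intact — and recording the Hadamard–permutation identity $(Pu)*(Pv)=P(u*v)$ used in (iii).
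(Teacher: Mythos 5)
Your proposal is correct and follows essentially the same route as the paper: invertibility of $C_0$ via the tuple $(x_0,0,\dots,0)$, strict semimonotonicity of $C_0^{-1}C_i$ by padding the tuple with zeros in the other slots, (ii) by left-multiplying by $C_0$, and (iii) by the substitution $x_j=Py_j$ together with the fact that permutations preserve nonnegativity and the Hadamard product. No discrepancies worth noting.
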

\begin{proof}
(i): Suppose there exists a vector $x_{0}\in \Rr$ such that $C_0x_{0}=0$. Then we have $$C_0x_{0}=C_1 0+C_2 0+ ...+C_k 0.$$ This gives that $x_{0}=0$  as ${\bf C}$ has the  ${\bf SSM}$-$ W$ property. Thus $C_0$ is invertible.

Now we prove the second part of (i). Without loss of generality, it is enough to prove that $C^{-1}_0 C_1$ is a strictly semimonotone matrix. Suppose there exists a vector $y \in\Rr$ such that $y \geq 0$ and $y * (C_0^{-1}C_1) y \leq 0.$  Let $y_0:=(C_0^{-1}C_1)y$, $y_1:=y$ and $y_i:=0$ for all $2\leq i\leq k$.  Then we get $$C_0y_{0}=C_1 y_{1}+C_2 y_{2}+...+C_iy_{i}+..+C_k y_{k},~~y_{j} \geq 0~\text{and}~y_{0} * y_{j}\leq 0~\forall j\in [k].$$ 
Since ${\bf C}$ has the  ${\bf SSM}$-$ W$ property, $y_{j}=0~~\forall j\in [k]$. Thus $C_0^{-1}C_1$ is a strict semimonotone matrix. This completes the proof.

(ii): It follows from the definition of the  ${\bf SSM}$-$W$ property.

(iii): Let ${\bf x}=(x_{0},x_{1},...,x_{k})\in \Lambda^{(k+1)}_n$ such that 
$$ (P^TC_0P )x_{0}=\sum_{i=1}^{k} (P^TC_iP) x_{i},~x_{j}\geq 0~\text{and}~x_{0}* x_{j}\leq 0 ~\forall j\in[k].$$ As $P$ is a non-negative matrix and  $PP^T=P^TP$, we can rewrite the above equation as
$$ C_0Px_{0}=\sum_{i=1}^{k} C_iP x_{i},~ P x_{j} \geq 0 ~\text{and}~Px_{0}* Px_{j}\leq 0 ~\forall j\in[k].$$
By the  ${\bf SSM}$-$ W$ property of ${\bf C}$, $P x_{j}=0$ for all $0\leq j \leq k$ which implies $\x ={\bf 0}$. This completes the proof.
\end{proof} 
In the above Proposition \ref{P2}, it can be seen easily that the converse of the item (ii) and (iii) are valid. But the converse of item (i) need not be true. The following example illustrates this.
\begin{example}\rm
Let ${\bf C}=(C_0,C_1,C_2)\in \Lambda^{(3)}_{2\times 2}$,  where $$C_0=\begin{bmatrix}
	1&0\\0&1\\
\end{bmatrix},~C_1=\begin{bmatrix}
	1&-2\\0&1\\
\end{bmatrix},C_2=\begin{bmatrix}
	1&0\\-2&1\\
\end{bmatrix}.$$ It is easy to check that $C_0^{-1}C_1=C_1$ and $C_0^{-1}C_2=C_2$ are $P$  matrix. So, $C_0^{-1}C_1$ and $ C_0^{-1}C_2$ are SSM matrix. Let $\x=(x_{0},x_{1},x_{2})=((0,0)^T,(1,1)^T,(1,1)^T)\in \Lambda^{(3)}_2$. Then we can see that the non-zero $\bf{x}$ satisfies $$C_0x_{0}=C_1x_{1}+C_2x_{2},~x_{1}\geq 0,~x_{2} \geq 0~\text{and}~x_{0}*x_{1}=0=x_{0} * x_{2}.$$ So ${\bf C}$ can not have the ${\bf SSM}$-$ W$ property.
\end{example}
The following result is a generalization of a well-known result in matrix theory that every $P$ matrix is a SSM matrix.
\begin{theorem}\label{T4}
Let  ${\bf C}=(C_0,C_1,...,C_k)\in \Lambda^{(k+1)}_{n\times n}$. If  ${\bf C}$ has the column $W$-property, then ${\bf C}$ has the ${\bf SSM}$-$ W$ property. 
\end{theorem}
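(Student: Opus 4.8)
The plan is to use the determinantal characterization of the column $W$-property, i.e.\ the equivalence (i)$\Leftrightarrow$(ii) in Theorem~\ref{P1}. Concretely, I would start from an arbitrary $\x=(x_0,x_1,\dots,x_k)$ satisfying the ${\bf SSM}$-$W$ system $C_0x_0=\sum_{i=1}^kC_ix_i$, $x_i\ge 0$ and $x_0*x_i\le 0$ for all $i\in[k]$, and manufacture non-negative diagonal matrices $D_0,D_1,\dots,D_k$ together with a vector $w$ such that $\big(C_0D_0+\sum_{i=1}^kC_iD_i\big)w=0$, $\mathrm{diag}(D_0+D_1+\cdots+D_k)>0$, and $w=0$ precisely when $\x=0$. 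Since $\mathbf{C}$ has the column $W$-property, Theorem~\ref{P1} forces $\det\big(C_0D_0+\sum_{i=1}^kC_iD_i\big)\ne 0$, hence $w=0$, and therefore $\x=0$; this is exactly the ${\bf SSM}$-$W$ property. (No reduction to the case $C_0=I$ is needed.)

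The only structural fact I would extract from the hypotheses is this: for every coordinate $l$ with $(x_0)_l>0$, the inequalities $(x_i)_l\ge 0$ and $(x_0)_l(x_i)_l\le 0$ force $(x_i)_l=0$ for all $i\in[k]$. Let $S:=\{\,l\in[n]:(x_i)_l=0\text{ for all }i\in\{0,1,\dots,k\}\,\}$ be the set of coordinates on which every component of $\x$ vanishes, so that $S=[n]$ iff $\x=0$. Now define $w$ and the $D_i$ coordinatewise in three cases: if $(x_0)_l>0$, set $w_l=1$, $(D_0)_{ll}=(x_0)_l$, $(D_i)_{ll}=0$ for $i\in[k]$; if $(x_0)_l\le 0$ and $l\notin S$, set $w_l=-1$, $(D_0)_{ll}=-(x_0)_l$, $(D_i)_{ll}=(x_i)_l$ for $i\in[k]$; and if $l\in S$, set $w_l=0$, $(D_0)_{ll}=1$, $(D_i)_{ll}=0$ for $i\in[k]$. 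All $D_i$ are then non-negative diagonal matrices, and a coordinatewise check in the three cases gives $D_0w=x_0$ and $D_iw=-x_i$ for every $i\in[k]$; hence $\big(C_0D_0+\sum_{i=1}^kC_iD_i\big)w=C_0x_0-\sum_{i=1}^kC_ix_i=0$. A second case check shows that $\mathrm{diag}(D_0+\cdots+D_k)_l$ equals $(x_0)_l$, or $-(x_0)_l+\sum_{i\in[k]}(x_i)_l$, or $1$ in the three cases respectively, each of which is strictly positive — the middle one because $l\notin S$ makes some component of $\x$ nonzero at $l$ (namely $x_0$ with $(x_0)_l<0$, or some $x_i$ with $i\in[k]$ and $(x_i)_l>0$). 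Finally $w_l=0$ holds exactly for $l\in S$, so $w=0$ iff $\x=0$; invoking Theorem~\ref{P1} then closes the argument.

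The computations above are routine once the choices are fixed, so the real work is choosing $w$ and the $D_i$. I expect the coordinates in $S$ — where every component of $\x$ is zero — to be the sticking point: the "obvious" choices would put a zero on the diagonal of $D_0+\cdots+D_k$ there and violate the positivity hypothesis needed for Theorem~\ref{P1}. The remedy is to pad those coordinates with $(D_0)_{ll}=1$ while simultaneously killing $w_l$, which preserves both $D_0w=x_0$ and the homogeneous equation. The other point to watch is that the sign pattern $w_l=\pm1$ off $S$ must be compatible with non-negativity of all of $D_0,\dots,D_k$ at once; this is precisely where the structural remark $(x_0)_l>0\Rightarrow(x_i)_l=0$ for $i\in[k]$ is used.
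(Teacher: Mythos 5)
Your construction is, up to the harmless sign change $w=-y$ and phrasing the argument directly rather than by contradiction, exactly the paper's proof: the paper builds the same vector and the same diagonal matrices $D_0,\dots,D_k$ (its four sign cases collapse into your three, with $(D_0)_{ll}=-(x_0)_l=0$ in the case $(x_0)_l=0$, $l\notin S$), obtains $\big(C_0D_0+\sum_{i=1}^kC_iD_i\big)y=0$ with $\mathrm{diag}(D_0+\cdots+D_k)>0$, and invokes Theorem~\ref{P1}(ii). The proposal is correct and complete, including the padding of the coordinates in $S$ that you correctly identify as the one delicate point.
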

\begin{proof}
Suppose there exists a non-zero vector ${\bf x}=(x_{0},...,x_{k})\in \Lambda^{(k+1)}_n$ such that $$C_0x_{0}=\sum_{i=1}^{k} C_ix_{i},~x_{j}\geq 0,~ x_{0}* x_{j}\leq 0~\forall j\in[k].$$ 
Consider a vector $y\in\Rr$ whose $j^{\rm{th}}$ component is given by $$y_j=\begin{cases}
	-1 & \text{if} ~(x_{0})_j>0\\ 1 & \text{if}  ~(x_{0})_j<0\\1 & \text{if} ~(x_{0})_j=0 ~\text{and}~(x_{i})_j\neq 0~\text{for some}~ i\in[k]\\0 & \text{if}~ (x_{0})_j=0 ~\text{and} ~(x_{i})_j= 0 ~\text{for all}~i\in[k] 
\end{cases}.$$
As ${\bf{x}}$ is a non-zero vector, ${\bf{y}}$ must be a non-zero vector. Consider the diagonal matrices $D_{0}, D_{1},...,D_{k}$ which are defined by $$(D_{0})_{jj}=\begin{cases}(x_{0})_j & \text{if} ~(x_{0})_j>0\\ -(x_{0})_j & \text{if}  ~(x_{0})_j<0\\0 & \text{if} (x_{0})_j=0~\text{and}~(x_{i})_j\neq 0~\text{for some}~ i\in[k] \\1 & \text{if}~ (x_{0})_j=0 ~\text{and} ~(x_{i})_j= 0 ~\text{for all}~i\in[k]
\end{cases}$$ and  for all $i\in [k]$,
$$(D_{i})_{jj}=\begin{cases}
	0&\text{if } (x_{0})_j>0\\ (x_{i})_j&\text{ else }
\end{cases}.$$
It is easy to verify that $D_{0}, D_{1},...,D_{k}$ are non-negative diagonal matrices and  $\text{diag}(D_{0}+ D_{1}+...+D_{k})>0$. And also note that  
\begin{equation}\label{22}
	x_{0}=-D_{0}y~\text{and}~x_{i}=D_{i}y~\forall i\in [k].
\end{equation}
By substituting the Equation \ref{22} in $C_0x_{0}=\sum_{i=1}^{k} C_ix_{i}$, we get \begin{equation*}
	C_0(-D_{0}y)=\sum_{i=1}^{k} C_iD_{i}(y) \Rightarrow 
	\big(C_0D_{0}+C_1D_{1}+...+C_kD_{k}\big)y=0.
\end{equation*} 
This implies that det$(C_0D_{0}+C_1D_{1}+...+C_kD_{k}\big)=0$. So, ${\bf C}$ does not have the column $W$-property from Theorem \ref{P1}. Thus we get a contradiction. Therefore, ${\bf C}$ has the ${\bf SSM}$-$W$ property.  
\end{proof}
The following example illustrates that the converse of the above theorem is invalid. 
\begin{example}\rm
Let ${\bf C}=(C_0,C_1,C_2) \in \Lambda^{(3)}_{2\times 2}$ such that $$C_0=\begin{bmatrix}
	1&0\\0&1\\
\end{bmatrix},~C_1=\begin{bmatrix}
	1&1\\1&1\\
\end{bmatrix},C_2=\begin{bmatrix}
	1&1\\1&1\\
\end{bmatrix}.$$  Suppose $ {\bf{w}} =(x,y,z)\in \Lambda^{3}_2$ such that $$ C_{0}x=C_{1}y+C_{2}z~\text{and}~ y,z\geq 0, x*y\leq 0, x*z\leq 0.$$ 
From $C_{0}x=C_{1}y+C_{2}z$, we get
$$\begin{bmatrix}
	x_{1}\\x_{2}
\end{bmatrix}=\begin{bmatrix}
	y_1+y_2+z_1+z_2\\y_1+y_2+z_1+z_2\\
\end{bmatrix}.$$ As $x*y\leq 0,~x*z\leq 0$ and from the above equation, we have 
\begin{equation}\label{33}
	\begin{aligned}
		y_1&(y_1+y_2+z_1+z_2)\leq 0~\text{and}~ y_2(y_1+y_2+z_1+z_2)\leq 0,\\
		z_1&(y_1+y_2+z_1+z_2)\leq 0~\text{and}~
		z_2(y_1+y_2+z_1+z_2)\leq 0.\\
	\end{aligned}
\end{equation}
Since $y,z\geq 0$, from the equation \ref{33}, we get $x=y=z=0.$ Hence ${\bf C}$ has the ${\bf SSM}$-$ W$ property. As $\text{det}(C_1)=0$, by  the definition of the column $W$-property,  ${\bf C}$ does not have the column $W$-property. 
\end{example} 
We now give a characterization for ${\bf SSM}$-$W$ property. 
\begin{theorem}\label{CW}
Let ${\bf C}=(C_0,C_1,...,C_k)\in \Lambda^{(k+1)}_{n\times n}$ has the ${\bf SSM}$-$ W$ property if and only if $(C_0,C_1D_{1}+C_2D_{2}+...+C_kD_{k})\in \Lambda_{n\times n}^{(2)}$ has the ${\bf SSM}$-$ W$ property for any set of non-negative diagonal matrix $(D_{1},D_{2},...,D_{k})\in \Lambda^{(k)}_{n\times n}$ with $\text{\rm diag}(D_{1}+D_{2}+...+D_{k})>0$.
\end{theorem}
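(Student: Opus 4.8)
The plan is to prove the two implications separately; in both directions the argument is a change of variables relating the solutions of the defining system of ${\bf C}$ to those of the defining system of the pair $(C_0,\,C_1D_1+\cdots+C_kD_k)$, the only genuine work being the construction, in one direction, of the diagonal matrices.

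\emph{Necessity.} Assume ${\bf C}$ has the ${\bf SSM}$-$W$ property and fix non-negative diagonal matrices $D_1,\dots,D_k$ with $\text{\rm diag}(D_1+\cdots+D_k)>0$. Let $u,v\in\Rr$ satisfy $C_0u=(C_1D_1+\cdots+C_kD_k)v$, $v\geq 0$ and $u*v\leq 0$. Set $x_0:=u$ and $x_i:=D_iv$ for $i\in[k]$. Then $C_0x_0=\sum_{i=1}^kC_ix_i$, each $x_i\geq 0$ (as $D_i\geq 0$ and $v\geq 0$), and for every coordinate $j$ one has $(x_0*x_i)_j=(D_i)_{jj}\,(u*v)_j\leq 0$. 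The ${\bf SSM}$-$W$ property of ${\bf C}$ forces $\x={\bf 0}$, so $u=0$ and $D_iv=0$ for all $i$; adding these, $(D_1+\cdots+D_k)v=0$, and since $D_1+\cdots+D_k$ is a diagonal matrix with strictly positive diagonal it is invertible, whence $v=0$. Thus $(C_0,C_1D_1+\cdots+C_kD_k)$ has the ${\bf SSM}$-$W$ property.

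\emph{Sufficiency.} Suppose $C_0x_0=\sum_{i=1}^kC_ix_i$ with $x_i\geq 0$ and $x_0*x_i\leq 0$ for all $i\in[k]$; we must deduce $\x={\bf 0}$. The idea is to write all the $x_i$ as $D_iw$ for a single $w\geq 0$ and non-negative diagonal matrices $D_i$. Define, coordinatewise: if $(x_i)_j=0$ for all $i\in[k]$, put $w_j:=0$, $(D_1)_{jj}:=1$ and $(D_i)_{jj}:=0$ for $i\geq 2$; otherwise put $w_j:=\max_{i\in[k]}(x_i)_j>0$ and $(D_i)_{jj}:=(x_i)_j/w_j$. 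A direct check gives: $w\geq 0$; each $D_i$ is a non-negative diagonal matrix; $\text{\rm diag}(D_1+\cdots+D_k)>0$ (the $j$-th entry equals $1$ in the first case and is $\geq 1$ in the second); $D_iw=x_i$ for every $i$; and $x_0*w\leq 0$ (in the first case $w_j=0$, while in the second case $(x_i)_j>0$ for some $i$ together with $x_i\geq 0$ and $x_0*x_i\leq 0$ forces $(x_0)_j\leq 0$, hence $(x_0)_jw_j\leq 0$). Therefore $C_0x_0=(C_1D_1+\cdots+C_kD_k)w$ with $w\geq 0$ and $x_0*w\leq 0$, so applying the ${\bf SSM}$-$W$ property of the pair $(C_0,C_1D_1+\cdots+C_kD_k)$ to this particular choice of $D_i$ yields $x_0=0$ and $w=0$; consequently $x_i=D_iw=0$ for every $i$, that is, $\x={\bf 0}$.

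The algebraic verifications above are routine; the one point that requires care is the construction of $w$ and the $D_i$ in the sufficiency part. The coordinates $j$ at which every $(x_i)_j$ vanishes must be isolated and assigned $w_j=0$, both to keep the Hadamard inequality $x_0*w\leq 0$ valid there and, at the same time, to leave room to set $(D_1)_{jj}=1$ so that $\text{\rm diag}(D_1+\cdots+D_k)$ remains strictly positive. This is essentially the same bookkeeping device used in the proof of Theorem~\ref{T4}.
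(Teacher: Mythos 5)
Your proof is correct and follows essentially the same route as the paper: the necessity direction via $x_i:=D_iv$ is identical, and the sufficiency direction uses the same device of packaging the $x_i$ as $D_iw$ for a single non-negative $w$, differing only in a cosmetic normalization (you take $w_j=\max_i(x_i)_j$ and divide, whereas the paper takes $y_j\in\{0,1\}$ and puts $(D_i)_{jj}=(x_i)_j$). Your verification of $x_0*w\leq 0$ is in fact spelled out more carefully than in the paper's text.
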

\begin{proof}
{\it Necessary part}: Let $(D_{1},D_{2}...,D_{k})\in \Lambda^{(k)}_{n\times n}$ be the set of non-negative diagonal matrix with $\text{\rm diag}(D_{1}+D_{2}+...+D_{k})>0$. Suppose there exist vectors $x_{0}\in\mathbb{R}^n$ and $y\in \mathbb{R}^n_+ $ such that $$C_0x_{0}=\big(C_1D_{1}+C_2D_{2}+...+C_kD_{k}\big)y~~\text{and}~~x_{0}*y\leq 0.$$
For each  $i\in[k]$, we set $x_{i}:=D_{i}y$. As  each $D_{i}$ is a non-negative diagonal matrix, from $x_{0}*y\leq 0$, we get $x_{0}*x_{i}\leq 0~ \forall i\in[k]$. Then we have $$C_0x_{0}=C_1x_{1}+C_2x_{2}+...+C_kx_{k},$$ $$x_{i}\geq 0,~x_{0}*x_{i}\leq 0~\forall i\in[k].$$  As ${\bf C}$ has the ${\bf SSM}$-$ W$ property of ${\bf C}$, we must have $x_{0}=x_{1}=...=x_{k}=0.$ This implies $x_{1}+x_{2}+...+x_{k}=(D_{1}+D_{2}+...+D_{k})y=0$. As $\text{\rm diag}(D_{1}+D_{2}+....+D_{k})>0$, we have $y=0$. This completes the necessary part.

\noindent{\it Sufficiency part}: Let ${\bf x}=(x_{0},x_{1},...,x_{k})\in \Lambda^{(k+1)}_n$ such that  \begin{equation} \label{SS}
	C_{0}x_{0}=C_1x_{1}+C_2x_{2}+...+C_kx_{k}~~\text{and}~~x_{j}\geq 0,~x_{0}*x_{j}\leq 0~\forall j\in[k].
\end{equation}

We now consider an $n\times k$ matrix $X$ whose $j^{\rm th}$ column as $x_{j}$ for $j\in[k]$. So, $X=[x_{1}~x_{2}~...~x_{k}]$. Let $S:=\{i\in [k]: i^{\rm{th}} ~\text{row sum of $X$ is zero}\}$. From this,  we define a vector $y\in\Rr$ and diagonal matrices $D_{1},D_{2},..,D_{k}$ such that
$$y_i=\begin{cases}
	1 & i\notin S\\ 0 & i\in S\\
\end{cases}~~\text{and}~~ (D_{j})_{ii}=\begin{cases}
	(x_{j})_i & i\notin S\\~ 1 &i\in S\\
\end{cases},$$ where $(D_{j})_{ii}$ is the diagonal entry of $D_{j}$ for all $j\in[k]$.
It can be seen easily that $D_{j}y=x_{j}$ for all $j\in[k]$ and each $D_{j}$ is a non-negative diagonal matrix with $\text{\rm diag}(D_{1}+D_{2}+...+D_{k})>0$. Therefore, from  equation \ref{SS}, we get
$$C_0x_{0}=\big(C_1D_{1}+C_2D_{2}+...+C_kD_{k}\big)y,$$
$$x_{0}*y\leq 0.$$ From the hypothesis, we get $x_{0}=0=y$ which implies ${\bf{x}}={\bf 0}$. This completes the sufficiency part.
\end{proof} We now give a  characterization for the column $W$-property.
\begin{theorem}\label{CD}
Let ${\bf C}=(C_0,C_1,...,C_k)\in \Lambda^{(k+1)}_{n\times n}$ has the column-$ W$ property if and only if $(C_0,C_1D_{1}+C_2D_{2}+...+C_kD_{k})\in \Lambda_{n\times n}^{(2)}$ has the column-$ W$ property for any set of non-negative diagonal matrices $D_{1},D_{2},...,D_{k}$ of order $n$ with ${\rm{diag}}(D_{1}+D_{2}+...+D_{k})>0$.
\end{theorem}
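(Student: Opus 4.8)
The plan is to follow the same route as the proof of Theorem~\ref{CW}, replacing the definition of the ${\bf SSM}$-$W$ property by the determinantal characterization of the column $W$-property supplied by Theorem~\ref{P1}. Concretely, I will use repeatedly that, by Theorem~\ref{P1}(ii), a pair $(A,B)\in\Lambda^{(2)}_{n\times n}$ has the column $W$-property if and only if $\det(AE_{0}+BE_{1})\neq0$ for all non-negative diagonal matrices $E_{0},E_{1}$ with $\text{diag}(E_{0}+E_{1})>0$, and likewise ${\bf C}$ has the column $W$-property if and only if $\det\big(C_{0}D_{0}+C_{1}D_{1}+\cdots+C_{k}D_{k}\big)\neq0$ for all non-negative diagonal $D_{0},\dots,D_{k}$ with $\text{diag}(D_{0}+\cdots+D_{k})>0$.

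For necessity, fix non-negative diagonal matrices $D_{1},\dots,D_{k}$ with $\text{diag}(D_{1}+\cdots+D_{k})>0$ and set $M:=C_{1}D_{1}+\cdots+C_{k}D_{k}$. Given arbitrary non-negative diagonal $E_{0},E_{1}$ with $\text{diag}(E_{0}+E_{1})>0$, write $ME_{1}=\sum_{i=1}^{k}C_{i}(D_{i}E_{1})$ and put $\tilde{D}_{0}:=E_{0}$ and $\tilde{D}_{i}:=D_{i}E_{1}$ for $i\in[k]$. The only point to verify is $\text{diag}\big(\tilde{D}_{0}+\cdots+\tilde{D}_{k}\big)>0$: at an index $j$ with $(E_{0})_{jj}=0$ we necessarily have $(E_{1})_{jj}>0$ and $(D_{1}+\cdots+D_{k})_{jj}>0$, so the $j$-th diagonal entry of $E_{0}+(D_{1}+\cdots+D_{k})E_{1}$ is positive. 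Then Theorem~\ref{P1}(ii) applied to ${\bf C}$ gives $\det(C_{0}E_{0}+ME_{1})\neq0$; since $E_{0},E_{1}$ were arbitrary, Theorem~\ref{P1}(ii) applied to the pair $(C_{0},M)$ shows it has the column $W$-property.

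For sufficiency, take arbitrary non-negative diagonal matrices $D_{0},\dots,D_{k}$ with $\text{diag}(D_{0}+\cdots+D_{k})>0$; the goal is $\det\big(C_{0}D_{0}+\sum_{i=1}^{k}C_{i}D_{i}\big)\neq0$. The obstruction is that $\text{diag}(D_{1}+\cdots+D_{k})$ need not be positive, so the hypothesis does not apply directly. Let $S:=\{j\in[n]:(D_{1}+\cdots+D_{k})_{jj}=0\}$; for $j\in S$ every $(D_{i})_{jj}$ with $i\in[k]$ vanishes, hence $(D_{0})_{jj}>0$. Let $\Delta_{S}$ be the diagonal $\{0,1\}$-matrix with support $S$, put $\hat{D}_{1}:=D_{1}+\Delta_{S}$ and $\hat{D}_{i}:=D_{i}$ for $2\le i\le k$; then $\text{diag}(\hat{D}_{1}+\cdots+\hat{D}_{k})>0$, so by hypothesis $(C_{0},\sum_{i=1}^{k}C_{i}\hat{D}_{i})$ has the column $W$-property. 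Now apply Theorem~\ref{P1}(ii) to this pair with $E_{0}:=D_{0}$ and $E_{1}:=I-\Delta_{S}$: one checks $\text{diag}(E_{0}+E_{1})>0$ (its $j$-th entry is $(D_{0})_{jj}$ for $j\in S$ and $(D_{0})_{jj}+1$ for $j\notin S$), that $D_{i}\Delta_{S}=0$ for all $i\in[k]$ because $(D_{i})_{jj}=0$ on $S$, hence $\hat{D}_{i}E_{1}=D_{i}$ for every $i\in[k]$, and therefore $C_{0}E_{0}+\big(\sum_{i=1}^{k}C_{i}\hat{D}_{i}\big)E_{1}=C_{0}D_{0}+\sum_{i=1}^{k}C_{i}D_{i}$. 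Hence its determinant is nonzero, and since $D_{0},\dots,D_{k}$ were arbitrary, Theorem~\ref{P1}(ii) yields that ${\bf C}$ has the column $W$-property.

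The $\text{diag}(\cdot)>0$ checks and the determinant bookkeeping are routine; the one step I expect to need care is the sufficiency direction, where one must first ``pad'' the combination $C_{1}D_{1}+\cdots+C_{k}D_{k}$ along the index set $S$ via $\Delta_{S}$ so that the hypothesis becomes applicable, and then ``unpad'' via $E_{1}=I-\Delta_{S}$, exploiting that the $D_{i}$ already vanish on $S$ so that nothing is lost. Getting this $\Delta_{S}$ construction right is the crux of the argument.
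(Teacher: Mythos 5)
Your proof is correct and follows essentially the same route as the paper: the paper's sufficiency argument uses the identical ``padding'' construction, setting the diagonal entries of the $D_j$ to $1$ on the index set where $\sum_m E^m_{ii}=0$ (your $S$) and then cancelling via the complementary indicator matrix $E=I-\Delta_S$, the only cosmetic difference being that the paper pads every $D_j$ while you pad only $D_1$. Your necessity argument is a correct spelling-out of what the paper dismisses as ``obvious.''
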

\begin{proof}  
{\it Necessary part}: It is obvious. 

\noindent{\it Sufficiency part}: Let $\{E^{0},E^{1},...,E^{k}\}$ be a set of  non-negative diagonal matrices of order $n$ such that ${\rm{diag}}(E^{0}+E^{1}+...+E^{k})>0$. We claim that $\det(C_0E^{0}+C_1E^{1}+...+C_kE^{k}) \neq 0$.

To prove this, we first construct a set of non-negative diagonal matrices $D_{1},D_{2},...,D_{k}$ and $E$ as follows: $$(D_{j})_{ii}=\begin{cases}
	E^{j}_{ii}&\text{~if~} \sum_{m=1}^{k}E^{m}_{ii}\neq0\\
	1&\text{~if~} \sum_{m=1}^{k}E^{m}_{ii}=0\\
\end{cases}\text{ and } E_{ii}=\begin{cases}
	1 &\text{~if~} \sum_{m=1}^{k}E^{m}_{ii}\neq0\\
	0&\text{~if~} \sum_{m=1}^{k}E^{m}_{ii}=0\\
\end{cases},$$ where $(D_{j})_{ii}$ is $ii^{\rm th}$ diagonal entry of $D_{j}$ for $j\in[k]$ and $ E_{ii}$ is $ii^{\rm th}$ diagonal entry of matrix $E$. By an easy computation, we have $D_{j}E=E^{j}~\forall j\in[k]$ and ${\rm diag}(D_{1}+D_{2}+...+D_{k})>0$. From ${\rm{diag}}(E^{0}+E^{1}+...+E^{k})>0$, we get ${\rm diag}(E^{0}+E)>0$. As $D_{j}E=E^{j}~\forall j\in[k]$ and $(C_0,C_1D_{1}+C_2D_{2}+...+C_kD_{k})$ has column $W$-property, by Theorem \ref{P1}, we have  \begin{equation*}
	\begin{aligned}
		\det(C_0E^{0}+C_1E^{1}+...+C_kE^{k})&=\det(C_0E^{0}+C_1D_{1}E+...+C_kD_{k}E)\\
		&=\det(C_0E^{0}+(C_1D_{1}+...+C_kD_{k})E) \neq 0.
	\end{aligned}
\end{equation*}
Hence ${\bf C}$ has the column $W$-property. This completes the proof.
\end{proof}
A well-known result in the standard LCP is that strictly semimonotone matrix and $P$  matrix are equivalent in the class of $Z$ matrices (see, Theorem 3.11.10 in \cite{LCP}). Analogue this result, we prove the following theorem.
\begin{theorem}\label{cssm}
Let ${\bf C}=(C_0,C_1,...,C_k)\in \Lambda_{n\times n}^{(k+1)}$ such that $C_0^{-1}C_i$ be a $Z$ matrix for all $i\in[k]$. Then the following statements are equivalent.\begin{itemize}
	\item [\rm (i)] ${\bf C}$ has the column $W$-property.
	\item[\rm(ii)]  ${\bf C}$ has the ${\bf SSM}$-$ W$ property.
\end{itemize}
\end{theorem}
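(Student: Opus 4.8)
The plan is to get (i) $\Rightarrow$ (ii) for free from Theorem \ref{T4}, whose proof uses nothing about the matrices $C_0^{-1}C_i$; the whole substance is the converse (ii) $\Rightarrow$ (i), which I would reduce to the $k=1$ case via the two diagonal-reduction characterizations already in hand (Theorem \ref{CW} for the ${\bf SSM}$-$W$ property and Theorem \ref{CD} for the column $W$-property), together with the classical fact (Theorem 3.11.10 of \cite{LCP}) that a $Z$ matrix is a $P$ matrix if and only if it is strictly semimonotone.

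Concretely, assume ${\bf C}$ has the ${\bf SSM}$-$W$ property and fix an arbitrary tuple of non-negative diagonal matrices $(D_1,\dots,D_k)$ of order $n$ with $\text{\rm diag}(D_1+\dots+D_k)>0$. By the necessity direction of Theorem \ref{CW}, the pair $(C_0,M)$ with $M:=C_1D_1+\dots+C_kD_k$ has the ${\bf SSM}$-$W$ property, so by Proposition \ref{P2}(i) (its $k=1$ instance) the matrix $N:=C_0^{-1}M=\sum_{i=1}^{k}(C_0^{-1}C_i)D_i$ is strictly semimonotone; here $C_0$ is invertible because the hypothesis presupposes $C_0^{-1}C_i$ to be a $Z$ matrix.

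The only place the $Z$-hypothesis is genuinely used is the observation that $N$ is again a $Z$ matrix: for each $i$, right-multiplying the $Z$ matrix $C_0^{-1}C_i$ by the non-negative diagonal matrix $D_i$ rescales column $j$ by $(D_i)_{jj}\ge 0$, hence keeps all off-diagonal entries $\le 0$, and a finite sum of matrices with non-positive off-diagonal entries still has non-positive off-diagonal entries. Thus $N$ is a strictly semimonotone $Z$ matrix, hence a $P$ matrix by Theorem 3.11.10 of \cite{LCP}, and therefore $(C_0,M)$ has the column $W$-property by Theorem \ref{C1}. Since $(D_1,\dots,D_k)$ was arbitrary, the sufficiency direction of Theorem \ref{CD} yields that ${\bf C}$ has the column $W$-property, completing the cycle.

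I expect no real analytic difficulty: the argument is a bookkeeping chain, and the one point deserving care is invoking Theorems \ref{CW} and \ref{CD} in the correct directions and checking that the reduced matrix $N$ inherits the $Z$ structure for \emph{every} admissible choice of the $D_i$ — which is exactly why the hypothesis must be imposed on all $C_0^{-1}C_i$ and not merely on a single combination. A minor subtlety is that Proposition \ref{P2}(i) is stated for a general tuple; I am using only its $k=1$ form, and I would either cite it that way or simply reprove the one-line implication (if $y\ge 0$ and $y*Ny\le 0$, set $x_0:=Ny$, $x_1:=y$ and apply the ${\bf SSM}$-$W$ property of $(C_0,M)$) in place.
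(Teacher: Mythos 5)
Your proposal is correct and follows essentially the same route as the paper's own proof: both directions are handled identically, with (i)$\Rightarrow$(ii) cited from Theorem \ref{T4} and (ii)$\Rightarrow$(i) obtained by reducing to the pair $(C_0, C_1D_1+\dots+C_kD_k)$ via Theorems \ref{CW} and \ref{CD}, extracting strict semimonotonicity of $C_0^{-1}(C_1D_1+\dots+C_kD_k)$ from Proposition \ref{P2}, observing that this matrix is again a $Z$ matrix, and invoking the classical $Z$-plus-SSM-implies-$P$ result together with Theorem \ref{C1}. Your explicit justification that the $Z$ structure is preserved under right-multiplication by non-negative diagonal matrices and summation is a welcome detail that the paper leaves implicit.
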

\begin{proof}
(i)$\implies$(ii): It follows from Theorem \ref{T4}.

(ii)$\implies$(i): Let $\{D_{1}, D_{2},...,D_{k}\}$  be the set of non-negative diagonal matrices of order $n$ such that $\text{\rm diag}(D_{1}+D_{2}+...+D_{k})>0$. In view of Theorem \ref{CD}, it is enough to prove that $(C_0,C_1D_{1}+C_2D_{2}+...+C_kD_{k})$ has the column $W$-property.

As  ${\bf C}$ has the ${\bf SSM}$-$ W$ property, by Theorem \ref{CW}, we have $(C_0,C_1D_{1}+...+C_kD_{k})$ has  the ${\bf SSM}$-$ W$ property. So, by Proposition \ref{P2}, $\big(I,C_0^{-1}\big(C_1D_{1}+...+C_kD_{k}\big)\big)$ has the ${\bf SSM}$-$ W$ property and $C_0^{-1}\big(C_1D_{1}+C_2D_{2}+...+C_kD_{k}\big)$ is a strict semimonotone matrix. As $C_0^{-1}C_i$ is a $Z$ matrix, we get $C_0^{-1}\big(C_1D_{1}+C_2D_{2}+...+C_kD_{k}\big)$ is also a $Z$ matrix. Hence $C_0^{-1}\big(C_1D_{1}+C_2D_{2}+...+C_kD_{k}\big)$ is a $P$  matrix. So, by Theorem \ref{C1},  $(C_0,C_1D_{1}+C_2D_{2}+...+C_kD_{k})$ has the column $W$-property. Hence we have our claim.
\end{proof}
\begin{corollary}
Let ${\bf C}=(C_0,C_1,...,C_k)\in \Lambda_{n\times n}^{k+1}$ such that $C_0^{-1}C_i$ be a $Z$ matrix for all $i\in[k]$. Then the following statements are equivalent.
\begin{itemize}
	\item [\rm (i)]  ${\bf C}$ has the ${\bf SSM}$-$ W$ property.
	\item [\rm(ii)] For all $q\in\Rr$ and $\d\in\Lambda^{(k-1)}_{n,++}$, {\rm EHLCP}$({\bf C},\d,q)$ has a unique solution.
\end{itemize}
\end{corollary}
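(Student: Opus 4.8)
The plan is to derive this corollary by simply concatenating two equivalences that are already available in the paper, Theorem~\ref{cssm} and Theorem~\ref{P1}. First I would observe that the standing hypothesis here, namely that $C_0^{-1}C_i$ is a $Z$ matrix for every $i\in[k]$, is exactly the hypothesis required by Theorem~\ref{cssm}; hence for such ${\bf C}$ the ${\bf SSM}$-$W$ property is equivalent to ${\bf C}$ having the column $W$-property. Second, the equivalence of items (i) and (iv) in Theorem~\ref{P1} (the result of Gowda and Sznajder) says that ${\bf C}\in\Lambda^{(k+1)}_{n\times n}$ has the column $W$-property if and only if {\rm EHLCP}$({\bf C},\d,q)$ has a unique solution for all $q\in\Rr$ and all $\d\in\Lambda^{(k-1)}_{n,++}$. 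Stringing these together gives the stated equivalence.

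Concretely I would spell it out in both directions. For (i) $\Rightarrow$ (ii): assuming ${\bf C}$ has the ${\bf SSM}$-$W$ property, Theorem~\ref{cssm} yields the column $W$-property, and then Theorem~\ref{P1} (iv) delivers the unique solvability of {\rm EHLCP}$({\bf C},\d,q)$ for every $q$ and $\d$. For (ii) $\Rightarrow$ (i): assuming unique solvability for all $q$ and $\d$, Theorem~\ref{P1} gives the column $W$-property, and Theorem~\ref{cssm} then gives the ${\bf SSM}$-$W$ property. No auxiliary construction is needed beyond checking that the ambient class $\Lambda^{(k+1)}_{n\times n}$ and the $Z$-matrix condition match the hypotheses of the two cited results.

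I do not expect a genuine obstacle at this point: the real work has already been absorbed into Theorem~\ref{cssm}, whose proof in turn rests on Theorem~\ref{CW}, Theorem~\ref{CD}, Proposition~\ref{P2}, Theorem~\ref{C1}, and the classical fact that a strictly semimonotone $Z$ matrix is a $P$ matrix. The only care required in writing this corollary is the bookkeeping of which earlier statement supplies which implication, so the ``hard part'' is essentially nonexistent and the proof will be a short two-line chain of implications.
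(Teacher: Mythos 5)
Your proposal is correct and is essentially the paper's own proof: both directions are obtained by chaining Theorem~\ref{cssm} with the equivalence (i)$\Leftrightarrow$(iv) of Theorem~\ref{P1}. The only cosmetic difference is that for (ii)$\Rightarrow$(i) the paper cites Theorem~\ref{T4} (column $W$-property implies ${\bf SSM}$-$W$ unconditionally) rather than invoking the reverse direction of Theorem~\ref{cssm}; both routes are valid.
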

\begin{proof}
(i) $\implies$(ii): It follows from Theorem \ref{cssm} and Theorem \ref{P1}.
(ii)$\implies$(i): It follows from Theorem \ref{P1} and Theorem \ref{T4}.
\end{proof}
In the standard LCP \cite{deg}, the strictly semimonotone matrix gives the existence of a solution of LCP. We now prove that the same result holds in EHLCP. 
\begin{theorem}\label{degg}
Let ${\bf C}=(C_0,C_1,...,C_k)\in \Lambda^{(k+1)}_{n\times n}$ has the ${\bf SSM}$-$W$ property, then $\rm{{SOL}}({\bf C},\d,q)\neq \emptyset$ for all $q\in\Rr$ and $\d\in \Lambda^{(k+1)}_{n,++}$. 
\end{theorem}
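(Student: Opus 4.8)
The plan is to reduce the statement to the degree-theoretic existence criterion proved above (if $\mathbf{C}$ has the $R_0$-$W$ property and its EHLCP-degree $\mathrm{deg}(\mathbf{C},\mathbf{0})$ is nonzero, then $\mathrm{EHLCP}(\mathbf{C},\mathbf{d},q)$ has a nonempty compact solution set for all $q\in\mathbb{R}^n$ and all $\mathbf{d}\in\Lambda^{(k-1)}_{n,++}$). The first observation is that the ${\bf SSM}$-$W$ property implies the $R_0$-$W$ property: if $(x_{0},x_{1},\dots,x_{k})$ solves $C_0x_{0}=\sum_{i=1}^{k}C_ix_{i}$ together with $x_{0}\wedge x_{j}=0$ for all $j\in[k]$, then by Proposition~\ref{star} we have $x_{j}\ge 0$ and $x_{0}*x_{j}=0\le 0$ for every $j\in[k]$, so this tuple satisfies the condition defining the ${\bf SSM}$-$W$ property and must therefore be $\mathbf{0}$. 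Hence $\mathrm{SOL}(\mathbf{C},\mathbf{d},q)$ is bounded by Theorem~\ref{R_0}, and the whole proof reduces to showing that $\mathrm{deg}(\mathbf{C},\mathbf{0})\neq 0$.

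To compute the degree I would deform $\mathbf{C}$ to a set with the column $W$-property. Consider $\mathbf{C}(s):=(C_0,\,(1-s)C_1+sC_0,\,\dots,\,(1-s)C_k+sC_0)$ for $s\in[0,1]$. The key step is to verify that $\mathbf{C}(s)$ retains the ${\bf SSM}$-$W$ property for every $s$: if $C_0x_{0}=\sum_{i=1}^{k}((1-s)C_i+sC_0)x_{i}$ with $x_{i}\ge 0$ and $x_{0}*x_{i}\le 0$ for $i\in[k]$, then rewriting this as $C_0(x_{0}-s\sum_{m}x_{m})=(1-s)\sum_{i}C_ix_{i}$ and (for $s<1$) putting $y_{0}:=\frac{1}{1-s}(x_{0}-s\sum_{m}x_{m})$ and $y_{i}:=x_{i}$ gives $C_0y_{0}=\sum_{i}C_iy_{i}$ with $y_{i}\ge 0$ and $y_{0}*y_{i}\le 0$ (the last inequality using $s\ge 0$, $x_{m}\ge 0$), so the ${\bf SSM}$-$W$ property of $\mathbf{C}$ forces $\mathbf{y}=\mathbf{0}$ and hence $\mathbf{x}=\mathbf{0}$; the case $s=1$, where $\mathbf{C}(1)=(C_0,\dots,C_0)$, is immediate from the invertibility of $C_0$ (Proposition~\ref{P2}(i)). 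In particular every $\mathbf{C}(s)$ has the $R_0$-$W$ property, so by Lemma~\ref{l1} the solution set of $\mathrm{EHLCP}(\mathbf{C}(s),\mathbf{d},0)$ equals $\{\mathbf{0}\}$ for each $s$; thus the zero set of the homotopy $s\mapsto F_{\mathbf{C}(s)}$, where $F_{\mathbf{C}(s)}$ denotes the map of $\mathbf{C}(s)$ used to define its degree, is the bounded set $\{\mathbf{0}\}$, and homotopy invariance of the degree (property (D2)) yields $\mathrm{deg}(\mathbf{C},\mathbf{0})=\mathrm{deg}((C_0,\dots,C_0),\mathbf{0})$.

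Finally, $(C_0,\dots,C_0)$ has the column $W$-property, since its only column-representative matrix is the invertible matrix $C_0$. It therefore remains to show that a set with the column $W$-property has nonzero EHLCP-degree, and here I would invoke Theorem~\ref{P1}: $\mathrm{EHLCP}((C_0,\dots,C_0),\mathbf{d},q)$ has a unique solution for every $q$, and the homotopy $F_{(C_0,\dots,C_0)}-s\hat q$, whose zero set $\bigcup_{s\in[0,1]}\mathrm{SOL}((C_0,\dots,C_0),\mathbf{d},sq)$ is bounded by the limiting argument of Theorem~\ref{R_0}, identifies $\mathrm{deg}((C_0,\dots,C_0),\mathbf{0})$ with the degree of $F_{(C_0,\dots,C_0)}$ at $\hat q=(q,0,\dots,0)$; for a generic choice of $q$ the unique solution lies in the interior of a linearity cell of the piecewise-linear map $F_{(C_0,\dots,C_0)}$, on which this map is affine with invertible linear part (the column $W$-property being exactly what supplies this nonsingularity, cf. Theorem~\ref{P1}(ii)), so the degree there is $\pm 1$. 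Consequently $\mathrm{deg}(\mathbf{C},\mathbf{0})\neq 0$ and the theorem follows. I expect this last step — passing from the column $W$-property to $\pm 1$ for the EHLCP-degree, that is, locating a right-hand side at which the unique EHLCP solution is a regular point of the nonsmooth map $F_{(C_0,\dots,C_0)}$ — to be the main obstacle; the two reduction steps before it are routine.
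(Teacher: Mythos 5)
Your overall strategy --- reduce to the $R_0$-$W$ property plus nonvanishing EHLCP-degree, then compute the degree by a homotopy --- matches the paper's, but your homotopy is genuinely different. The paper deforms the complementarity map itself, setting $\Phi(\x,t)=tG(\x)+(1-t)F(\x)$ with $G(\x)=(C_0x_{0},x_{1},\dots,x_{k})$; it shows the zero set of $\Phi$ is $\{\bf 0\}$ by rewriting, for $t\in(0,1)$, the relation $x_{0}\wedge x_{1}=-\alpha x_{1}$ as $\min\{x_{0}+\alpha x_{1},(1+\alpha)x_{1}\}=0$, which yields $x_{j}\geq 0$ and $x_{0}*x_{j}\leq 0$ so that the ${\bf SSM}$-$W$ property applies, and then concludes $\text{deg}({\bf C},{\bf 0})=\text{deg}(G,\Omega,{\bf 0})\neq 0$ directly from Proposition \ref{ND} because $G$ is linear and injective. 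You instead deform the matrix tuple to $(C_0,\dots,C_0)$ inside the ${\bf SSM}$-$W$ class; your verification that ${\bf C}(s)$ retains the property is correct (the sign check $y_{0}*y_{i}=\tfrac{1}{1-s}\bigl(x_{0}*x_{i}-s\sum_{m}x_{m}*x_{i}\bigr)\leq 0$ works), and homotopy invariance then reduces everything to $\text{deg}((C_0,\dots,C_0),{\bf 0})$. The advantage of the paper's route is that it terminates in Proposition \ref{ND} and never has to evaluate the degree of the nonsmooth piecewise-linear map $F$; yours trades that for a degree computation at one very concrete tuple.

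The step you flag as the main obstacle is a genuine gap as written, but for $(C_0,\dots,C_0)$ it closes easily and you do not need any general ``column $W$-property implies nonzero EHLCP-degree'' lemma. Pick $q$ with $u:=C_0^{-1}q>0$. The homotopy $F-s\hat q$ has bounded zero set (each $\text{SOL}((C_0,\dots,C_0),\d,sq)$ is controlled by the $R_0$-$W$ property exactly as in Theorem \ref{R_0}, uniformly in $s\in[0,1]$), so $\text{deg}((C_0,\dots,C_0),{\bf 0})=\text{deg}(F,\Omega,\hat q)$. The unique solution of $F(\x)=\hat q$ is $(u,0,\dots,0)$: from $x_{0}-\sum_{i\geq 1}x_{i}=u$ and $x_{0}*x_{j}=0$ for all $j$ (Lemma \ref{l1}) one gets $(x_{0})_i>0$ for every $i$, hence $x_{1}=\dots=x_{k}=0$ and $x_{0}=u$. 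Since $u>0$ and each $d_{j}>0$, in a neighbourhood of this point every min appearing in $F$ selects its second argument, so $F$ coincides there with the affine map $\x\mapsto\bigl(C_0(x_{0}-\sum_{i}x_{i}),x_{1},\dots,x_{k}\bigr)$, whose linear part has determinant $\pm\det(C_0)\neq 0$; the local degree is therefore $\pm 1$ and, by uniqueness of the solution, so is $\text{deg}(F,\Omega,\hat q)$. With that paragraph added your argument is complete; alternatively, you could finish by running the paper's homotopy from $F_{(C_0,\dots,C_0)}$ to the injective linear map $G$, which avoids the computation altogether.
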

\begin{proof}
As ${\bf C}$ has the ${\bf SSM}$-$W$ property$, {\bf C}$ has the $R_0$-$W$ property. From Theorem \ref{P2}, it is enough to prove that ${\rm deg}({\bf C},{\bf 0})\neq 0$. To prove this, 
we consider a homotopy $\Phi: \Lambda_n^{(k+1)} \times [0,1] \to \Lambda_n^{(k+1)}$ as $$\Phi({\bf x},t)=t\begin{bmatrix}
	C_0x_{0} \\ x_{1}\\x_{2}\\x_{3}\\.\\.\\.\\x_{k} \\	
\end{bmatrix}+(1-t)\begin{bmatrix}
	C_0 x_{0} -\sum_{i=1}^{k} C_ix_{i}\\ x_{0}\wedge x_{1}\\ (d_{1}-x_{1})\wedge x_{2}\\ (d_{2}-x_{2})\wedge x_{3}\\ .\\ .\\ .\\ (d_{k-1}-x_{k-1})\wedge x_{k}\\
\end{bmatrix}.$$
Let $F({\bf x}):=\Phi({\bf x},0)$ and $G({\bf x}):=\Phi({\bf x},1)$. 
We first prove that the zero set $X=\{\x:\Phi(\x,t)={\bf 0}~\text{for some}~t\in[0,1]\}$ of homotopy $\Phi$ contains only zero.  We consider the following cases.

{\it Case 1}: Suppose $t=0$  or $t=1$. If $t=0$, then $\Phi({\bf x},0)={\bf 0}\implies F(\x)={\bf 0}$.  As $\bf{C}$ has the ${\bf SSM}$-$W$ property, by Lemma \ref{l1}, we have 
$F({\bf x})={\bf 0}\Rightarrow {\bf x}={\bf 0}$. If $t=1$, then $\Phi({\bf x},1)={\bf 0}\implies G(\x)={\bf 0}$. Again by $\bf{C}$ has the ${\bf SSM}$-$W$ property, $C^{-1}_0$ exists, which implies that $G$ is a one-one map. So, $G({\bf x})={\bf 0}\Rightarrow {\bf x}={\bf 0}.$ 

{\it Case 2}: Suppose $t\in(0,1)$. Then $\Phi(\x,t)={\bf 0}$ which gives that
\begin{equation}\label{pp}
	\begin{bmatrix}
		C_0 x_{0} -\sum_{i=1}^{k} C_ix_{i}\\ x_{0}\wedge x_{1}\\ (d_{1}-x_{1})\wedge x_{2}\\ (d_{2}-x_{2})\wedge x_{3}\\ .\\ .\\ .\\ (d_{k-1}-x_{k-1})\wedge x_{k}\\
	\end{bmatrix}=-\alpha\begin{bmatrix}
		C_0x_{0} \\ x_{1}\\x_{2}\\x_{3}\\.\\.\\.\\x_{k} \\	
	\end{bmatrix},~~\text{where}~~\alpha=\dfrac{t}{1-t}>0.
\end{equation}
From the second row of above equation, we have $$x_{0}\wedge  x_{1}=-\alpha {x_{1}}\implies \text{min}\{{x_{0}}+\alpha {x_{1}} ,(1+\alpha){x_{1}}\}=0.$$ By Proposition \ref{star}, we get $x_{1} \geq 0$ and $({x_{0}}+\alpha {x_{1}}) * (1+\alpha){x_{1}} =0 $ which implies that ${x_{0}} * {x_{1}} \leq 0.$ Set $\Delta:=\{i\in [n]: ({x_{1}})_{i} >0\}$. So, we have \begin{equation}\label{EX}
	(x_{0})_{i}=\begin{cases}
		~\leq 0~&{\text{if}}~ i\in \Delta\\
		~\geq 0~& {\text{if}} ~i \notin \Delta
	\end{cases}~~~\text{and}~~(x_{1})_{i}=\begin{cases}
		>0~~\text{if}~i\in \Delta\\
		=0~~\text{if}~i\notin \Delta
	\end{cases}.
\end{equation}
From third row of the equation \ref{pp}, we have $ (d_{1}-x_{1})\wedge  x_{2}=-\alpha {x_{2}}$ which is equivalent $$\min\{d_{1}-x_{1}+\alpha {x_{2}}, (1+\alpha){x_{2}}\}=0.$$ This gives that $x_{2}\geq 0$ and $(d_{1}-x_{1}+\alpha {x_{2}})*(1+\alpha){x_{2}}=0$. As $d_{1}>0$ and from the last term in equation \ref{EX}, we have $$(x_{2})_{i}=\begin{cases}
	\geq 0~\text{if}~i\in \Delta\\
	=0 ~\text{if}~i\notin \Delta
\end{cases}. $$
This leads that $x_{0} * x_{2}\leq 0$. By continuing the similar argument for the remaining rows, we get $$x_{j}\geq 0~~\text{and}~x_{0}*x_{j}\leq 0~\forall j\in [k].$$
From the first row of the equation \ref{pp}, the vectors ${\bf{x}}=(x_{0},x_{1},...,x_{k})$ satisfies $$	C_0(1+\alpha)  x_{0} =\sum_{i=1}^{k} C_i x_{i}~~\text{and}~~x_{j}\geq0,~{x_{0}}*{x_{j}}\leq 0, ~j\in[k].$$ So, ${\x}={\bf 0}$ as ${\bf C}$ has the ${\bf SSM}$-$ W$ property. 

From both cases, we get $X$ contains only zero. By the homotopy invariance property of degree (D2), we have $\text{deg}(\Phi(\x,0),\Omega,{\bf 0})=\text{deg}\big(\Phi(\x,1),\Omega,{\bf 0}\big)$ for any bounded open set containing ${\bf 0}$. As  $G$ is a continuous one-one function, by Proposition \ref{ND},  we have  $$\text{deg}\big({\bf C},{\bf 0}\big)=\text{deg}\big(\Phi(\x,0),\Omega,{\bf 0}\big)= \text{deg}\big(F,\Omega,{\bf 0}\big)=\text{deg}\big(G,\Omega,{\bf 0}\big)\neq 0.$$ This completes the proof.
\end{proof}
We now recall that  a matrix $A \in\R$ is said to be a $M$ matrix if it is $Z$ matrix and $A^{-1}(\mathbb{R}^n_+)\subseteq \mathbb{R}^n_+.$ We prove a uniqueness  result for EHLCP when $q\geq0$ and $\d\in\Lambda^{(k-1)}_{n,++}.$    
\begin{theorem}\label{smq}
Let ${\bf C}=(C_0,C_1,...,C_k)\in \Lambda^{(k+1)}_{n\times n}$ has the ${\bf SSM}$-$ W$ property. If $C_0$ is a $M$ matrix. then for every $q \in \Rr_+$ and for every $\d \in  \Lambda^{(k-1)}_{n,++}$, $\text{\rm EHLCP}({\bf C},\d,q)$ has a unique solution.
\end{theorem}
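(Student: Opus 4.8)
The plan is to produce one explicit solution and then use the ${\bf SSM}$-$W$ property to rule out any other. First I would record that, since $C_0$ is an $M$ matrix, it is invertible with $C_0^{-1}(\Rr_+)\subseteq\Rr_+$; hence for $q\in\Rr_+$ the vector $x_0^{*}:=C_0^{-1}q$ is non-negative. Then $\x^{*}:=(x_0^{*},0,\dots,0)$ solves {\rm EHLCP}$({\bf C},\d,q)$: the equation $C_0x_0^{*}=q+\sum_{i=1}^{k}C_i0$ holds by the choice of $x_0^{*}$; $x_0^{*}\wedge 0=0$ holds because $x_0^{*}\ge0$; and for $1\le j\le k-1$, $(d_j-0)\wedge 0=0$ holds because each $d_j>0$. (Alternatively, non-emptiness of $\mathrm{SOL}({\bf C},\d,q)$ is already guaranteed by Theorem \ref{degg}, but the explicit form of $\x^{*}$ is what drives the uniqueness part.)

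For uniqueness, let $\x=(x_0,x_1,\dots,x_k)$ be an arbitrary element of $\mathrm{SOL}({\bf C},\d,q)$. By Lemma \ref{l1}, $\x$ satisfies $C_0x_0=q+\sum_{i=1}^{k}C_ix_i$ and $x_0\wedge x_j=0$ for all $j\in[k]$; in particular, by Proposition \ref{star}, $x_j\ge0$ and $x_0*x_j=0$ for all $j\in[k]$. Set $z_0:=x_0-x_0^{*}=x_0-C_0^{-1}q$, so that $C_0z_0=C_0x_0-q=\sum_{i=1}^{k}C_ix_i$. The crucial observation is the sign of $z_0*x_i$: for each $i\in[k]$,
\[
z_0*x_i=x_0*x_i-(C_0^{-1}q)*x_i=-(C_0^{-1}q)*x_i\le 0,
\]
since $x_0*x_i=0$, $x_i\ge0$ and $C_0^{-1}q\ge0$ --- this last inequality is precisely where the $M$-matrix hypothesis on $C_0$ is used.

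Consequently the tuple $(z_0,x_1,\dots,x_k)$ satisfies $C_0z_0=\sum_{i=1}^{k}C_ix_i$ with $x_i\ge0$ and $z_0*x_i\le0$ for all $i\in[k]$, and the ${\bf SSM}$-$W$ property of ${\bf C}$ then forces $z_0=0$ and $x_1=\dots=x_k=0$. Hence $x_0=C_0^{-1}q$ and $x_i=0$ for $i\in[k]$, i.e. $\x=\x^{*}$, which gives the desired uniqueness. I expect no genuine obstacle here; the only point requiring attention is recognizing that the $M$-matrix condition is exactly what makes $z_0*x_i\le0$, after which the ${\bf SSM}$-$W$ property applies verbatim, the remaining steps being routine uses of Lemma \ref{l1} and Proposition \ref{star}.
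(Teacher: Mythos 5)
Your proof is correct and follows essentially the same route as the paper: exhibit $(C_0^{-1}q,0,\dots,0)$ as a solution using the $M$-matrix property, reduce any other solution via Lemma \ref{l1}, shift by $C_0^{-1}q$ to get $z_0*x_i=-(C_0^{-1}q)*x_i\le 0$, and invoke the ${\bf SSM}$-$W$ property. No substantive differences.
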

\begin{proof}
Let  $q \in \Rr_+$ and $\d=(d_{1}, d_{2},...,d_{k-1}) \in  \Lambda^{(k-1)}_{n,++}$. We first show $(C_0^{-1}q,0,...,0)\in\text{SOL}({\bf C},\d,q).$ As $C_0$ is a $M$ matrix and $q \in \Rr_+$, we have $C_0^{-1}q\geq 0$.  If we set ${\bf{y}}=(y_{0},y_{1},...,y_{k}):=(C_0^{-1}q,0,...,0)\in \Lambda^{(k+1)}_{n}$, then we can see easily that $(y_{0},y_{1},...,y_{k})$ satisfies that $$C_0 y_{0}=q+\sum_{i=1}^k C_i y_{i},~ y_{0}\wedge y_{1}=0~\text{and}~(d_{j}-y_{j})\wedge y_{j+1}=0~~\forall j\in[k-1].$$ Hence $(C_0^{-1}q,0,...,0)\in \text{SOL} ({\bf C},\d,q).$

Suppose ${\bf x}=(x_{0},x_{1},...,x_{k})\in \Lambda^{(k+1)}_n$ is an another solution to EHLCP($C,q,d$). Then, 
\begin{equation}\label{ssm}
	\begin{aligned}
		C_0 x_{0}=q+\sum_{i=1}^{k} C_ix_{i},~
		x_{0}\wedge x_{1}=0, ~ (d_{j}-x_{j})\wedge x_{j+1}=0~ \forall j\in[k-1].
	\end{aligned}
\end{equation}
From the Lemma \ref{l1}, we have
\begin{equation}\label{unique}
	C_0x_{0}=q+\sum_{i=1}^{k} C_ix_{i}~\text{and}~x_{0}\wedge x_{j}=0~\forall~j\in [k].
\end{equation}
We let ${\bf{z}}:=\x-\y$,  then ${\bf{z}}=(x_{0}-C_0^{-1}q, x_{1},x_{2},..,x_{k})$. By an easy computation, from Equation \ref{unique}, we get
$$C_0 (x_{0}-C_0^{-1}q)=\sum_{i=1}^k C_i x_{i}$$ and 
$$ x_{j}\geq 0,~~(x_{0}-C_0^{-1}q)*x_{j}=x_{0}*x_{j}-C_0^{-1}q*x_{j}=-C_0^{-1}q*x_{j}\leq 0~\forall j\in [k].$$ 
Since ${\bf C}$ has the ${\bf SSM}$-$ W$ property, ${\bf{z}}={\bf 0}$ which implies that $(x_{0},x_{1},...,x_{k})=(C_0^{-1}q,0,...,0).$ This completes the proof.
\end{proof}

\section{Connected solution set and Column ${W_0}$ property }
In this section, we give a necessary and sufficient condition for the connected solution set of the EHLCP.
\begin{definition}\rm
Let ${\bf C}=(C_0,C_1,...,C_k)\in\Lambda^{(k+1)}_{n\times n}$. We say that ${\bf C}$ is  connected if  $\text{SOL}({\bf C},\d,q)$ is connected for all $q\in\Rr$ and for all $\d\in\Lambda^{(k-1)}_{n,++}$.
\end{definition}
We now recall some definitions and results to proceed further.
\begin{definition}\rm\cite{semi}
A subset of $\Rr$ is said to be a semi-algebraic set it  can be represented as, $$S=\displaystyle\bigcup_{u=1}^{s}\bigcap_{v=1}^{r_u}\{\x\in\Rr;f_{u,v}(\x)*_{uv} 0\}, $$
where for all $u\in[s]$ and for all $v\in[r_u]$, $*_{uv}\in\{\ >, =\}$ and $f_{u,v}$ is in the space of all real polynomials.
\end{definition}
\begin{theorem}[\cite{semi}]\label{sctd}
Let $S$ be a semi-algebraic set. Then $S$ is connected iff $S$ is path-connected.
\end{theorem}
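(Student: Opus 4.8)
The forward direction is standard point-set topology: any path-connected space is connected, so the whole content is to show that a connected semi-algebraic set $S$ (say $S\subseteq\Rr$) is path-connected. The plan is to reduce this to the cell decomposition theorem for semi-algebraic sets available in \cite{semi}: one writes $S$ as a finite disjoint union of cells $S=\bigsqcup_{i=1}^{m}C_i$, where each $C_i$ is semi-algebraically homeomorphic to an open box $(0,1)^{d_i}$ (with $(0,1)^{0}$ read as a single point). In particular every cell $C_i$ is path-connected, and it remains to splice together paths across cells.

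To organize the splicing I would introduce the finite ``adjacency graph'' $\Gamma$ whose vertices are the cells $C_1,\dots,C_m$, with an edge between $C_i$ and $C_j$ ($i\neq j$) whenever $C_i\cap\overline{C_j}\neq\emptyset$ or $\overline{C_i}\cap C_j\neq\emptyset$. The first key step is: \textbf{if $C_i$ and $C_j$ are adjacent, any point of $C_i$ can be joined to any point of $C_j$ by a path inside $C_i\cup C_j\subseteq S$.} Here I would invoke the curve selection lemma for semi-algebraic sets: if, say, $x\in\overline{C_j}\cap C_i$, there is a continuous (indeed semi-algebraic) arc $\gamma\colon[0,\varepsilon]\to\Rr$ with $\gamma(0)=x$ and $\gamma\big((0,\varepsilon]\big)\subseteq C_j$; concatenating $\gamma$ with a path inside the path-connected cell $C_j$ and with a path inside $C_i$ gives the claim. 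Iterating along an edge-path in $\Gamma$, any two points lying in cells in a common connected component of $\Gamma$ are joined by a path in $S$.

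The second key step is: \textbf{$S$ connected implies $\Gamma$ connected.} Suppose not, and partition the vertex set into nonempty parts $V_1,V_2$ with no edge between them; set $S_\ell=\bigcup_{C_i\in V_\ell}C_i$. Since the cells partition $S$, any $y\in\overline{C_i}\cap S$ lies in a unique cell $C_j$, and then $C_j\cap\overline{C_i}\ni y$, so either $j=i$ or $C_i$ is adjacent to $C_j$; hence $\overline{C_i}\cap S$ is contained in the union of $C_i$ with its $\Gamma$-neighbours. As $V_1$ has no neighbour in $V_2$, this yields $\overline{S_1}\cap S\subseteq S_1$ and, symmetrically, $\overline{S_2}\cap S\subseteq S_2$; thus $S_1,S_2$ are nonempty, complementary, and relatively closed in $S$, i.e.\ a disconnection of $S$ — a contradiction. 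Combining the two steps, $\Gamma$ is connected, so by the first step any two points of $S$ are joined by a path in $S$, proving $S$ path-connected.

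I expect the main obstacle to be the use of the curve selection lemma in the first step, i.e.\ actually producing a path that moves off the closure of one cell onto an adjacent cell while staying in $S$; the bookkeeping in the second step (translating a separation of $S$ into a disconnection of $\Gamma$ via the frontier behaviour of cells) is routine but must be done carefully. An alternative, slicker route would bypass $\Gamma$ entirely: prove that semi-algebraic sets are locally path-connected (via the local conic structure theorem) and then quote the general topological fact that in a locally path-connected space the connected components coincide with the path components; in that approach the difficulty simply relocates to proving local path-connectedness.
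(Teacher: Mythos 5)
This theorem is imported from the reference \cite{semi} and the paper gives no proof of it, so there is no internal argument to compare against; what you have written is a correct, self-contained proof along the standard lines of the real-algebraic-geometry literature. Your two key steps are both sound: a cylindrical algebraic decomposition adapted to $S$ writes it as a finite disjoint union of cells, each semi-algebraically homeomorphic to an open box and hence path-connected; the curve selection lemma legitimately produces, from a point $x\in C_i\cap\overline{C_j}$, an arc starting at $x$ with the rest of its image in $C_j$, which is exactly what is needed to splice paths across adjacent cells; and your frontier bookkeeping (each $\overline{C_i}\cap S$ meets only $C_i$ and its $\Gamma$-neighbours, so a disconnection of $\Gamma$ yields a partition of $S$ into two nonempty relatively closed pieces) correctly converts connectedness of $S$ into connectedness of the adjacency graph. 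The only caveats worth recording are that you are importing two nontrivial external results (cell decomposition and curve selection, both available in \cite{semi}), and that the statement in that reference is phrased in terms of semi-algebraic connectedness and semi-algebraic path-connectedness, which over $\mathbb{R}$ coincide with the topological notions used here; your alternative route via local conic structure and local path-connectedness is equally valid and, as you note, merely relocates the difficulty.
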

\begin{lemma}\label{CC}
The $\text{\rm SOL}({\bf C},\d,q)$ is a semi-algebraic set.
\end{lemma}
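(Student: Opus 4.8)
The plan is to exhibit $\text{SOL}({\bf C},\d,q)$ explicitly as a finite union of finite intersections of polynomial (indeed, linear) equalities and non-strict inequalities, and then note that non-strict inequalities $f\geq 0$ are harmless because $\{f\geq 0\}=\{f>0\}\cup\{f=0\}$, so the resulting set still fits the definition of a semi-algebraic set. The starting point is Proposition~\ref{star}: each complementarity condition $u\wedge v=0$ is equivalent to $u\geq 0$, $v\geq 0$, and $\langle u,v\rangle=0$. Applying this to every complementarity relation in \eqref{e1} rewrites membership in $\text{SOL}({\bf C},\d,q)$ as a conjunction of the single linear system $C_0x_0=q+\sum_{i=1}^k C_ix_i$ together with, for each of the $k$ complementarity pairs, the conditions ``both vectors componentwise nonnegative'' and ``their inner product is zero.'' All of these are polynomial conditions in the entries of $\x=(x_0,\dots,x_k)\in\Lambda^{(k+1)}_n$ (the linear equations and nonnegativity constraints have degree~$1$; the orthogonality constraints have degree~$2$), so $\text{SOL}({\bf C},\d,q)$ is cut out by finitely many polynomial equalities and non-strict inequalities.

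Next I would address the only mild subtlety, namely that the definition of a semi-algebraic set in the excerpt only permits the relations $>$ and $=$, not $\geq$. To handle a constraint $f(\x)\geq 0$, I rewrite it as $\{f(\x)>0\}\cup\{f(\x)=0\}$. Since $\text{SOL}({\bf C},\d,q)$ is a finite intersection of such sets (the equalities, plus $n$ nonnegativity inequalities per complementarity pair, plus the orthogonality equalities), and since semi-algebraic sets are closed under finite unions and finite intersections, distributing the unions over the intersection expresses $\text{SOL}({\bf C},\d,q)$ in exactly the required form $\bigcup_{u}\bigcap_{v}\{\x:f_{u,v}(\x)*_{uv}0\}$ with each $*_{uv}\in\{>,=\}$. (Alternatively one may invoke the standard fact that the class of semi-algebraic sets coincides with the Boolean algebra generated by sets of the form $\{f\geq 0\}$, so no explicit bookkeeping is needed.)

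This argument is essentially routine; there is no real obstacle. The one place that requires a word of care is confirming that the underlying space $\Lambda^{(k+1)}_n$ is just a Euclidean space $\mathbb{R}^{(k+1)n}$, so that ``polynomial in the coordinates of $\x$'' is unambiguous and the definition of semi-algebraic set applies verbatim; once that identification is made, the translation via Proposition~\ref{star} and the $\geq$-to-$(>,=)$ reduction complete the proof. I would keep the write-up short, citing Proposition~\ref{star} for the complementarity rewriting and stating the closure properties of semi-algebraic sets as well-known.
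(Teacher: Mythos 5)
Your proposal is correct and is just the spelled-out version of what the paper does: the paper's proof is the one-line assertion that the claim ``is clear from the definition of $\text{SOL}({\bf C},\d,q)$,'' and your rewriting via Proposition~\ref{star} together with the $\{f\geq 0\}=\{f>0\}\cup\{f=0\}$ decomposition supplies exactly the routine details being left implicit.
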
 
\begin{proof}
It is clear from the definition of $\text{SOL}({\bf C},\d,q)$.
\end{proof}
The following result gives a necessary condition for a connected solution whenever $C_0$ is a $M$ matrix.
\begin{theorem}
Let $C_0\in\R $ be a $M$ matrix. If  ${\bf C}=(C_0,C_1,...,C_k)\in\Lambda^{(k+1)}_{n\times n}$ is connected, then $\text{\rm SOL}({\bf C},\d,q)=\{(C_0^{-1}q,0,...,0)\}$ for all $q\in\mathbb{R}^n_{++}$ and for all $\d\in\Lambda^{(k-1)}_{n,++}$.
\end{theorem}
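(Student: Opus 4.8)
The plan is to exhibit $\y:=(C_0^{-1}q,0,\dots,0)$ as a point which is simultaneously a solution and \emph{isolated} in $\text{SOL}({\bf C},\d,q)$, and then to run a connectedness argument. First I would record that $\y\in\text{SOL}({\bf C},\d,q)$: since $C_0$ is an $M$ matrix, $C_0^{-1}\geq 0$, so $C_0^{-1}q\geq 0$, and a direct substitution (exactly as in the first paragraph of the proof of Theorem \ref{smq}) shows that $\y$ satisfies \eqref{e1} using Proposition \ref{star} together with $\d\in\Lambda^{(k-1)}_{n,++}$. In particular $\text{SOL}({\bf C},\d,q)\neq\emptyset$, and by hypothesis it is connected.

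Next comes the key observation, which is where the strict positivity $q\in\mathbb{R}^n_{++}$ is used: because $C_0^{-1}$ is invertible and nonnegative it has no zero row, hence $C_0^{-1}q\in\mathbb{R}^n_{++}$. Consider the open set $U:=\{(x_0,x_1,\dots,x_k)\in\Lambda^{(k+1)}_n : x_0\in\mathbb{R}^n_{++}\}$, which contains $\y$. I claim that $\text{SOL}({\bf C},\d,q)\cap U=\{\y\}$. Indeed, if $\x=(x_0,x_1,\dots,x_k)$ lies in this intersection, then by Lemma \ref{l1} it satisfies $x_0\wedge x_j=0$ for all $j\in[k]$, so by Proposition \ref{star} we have $x_j\geq 0$ and $x_0*x_j=0$; since every coordinate of $x_0$ is strictly positive, this forces $x_j=0$ for all $j\in[k]$. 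Then $C_0x_0=q$, i.e.\ $x_0=C_0^{-1}q$, and therefore $\x=\y$.

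Finally I would conclude by a topological argument. The claim above shows that $\{\y\}=\text{SOL}({\bf C},\d,q)\cap U$ is relatively open in $\text{SOL}({\bf C},\d,q)$; being a singleton in the finite dimensional space $\Lambda^{(k+1)}_n$ it is also relatively closed. Since $\text{SOL}({\bf C},\d,q)$ is connected and nonempty, its only nonempty subset that is both relatively open and relatively closed is the whole set, so $\text{SOL}({\bf C},\d,q)=\{\y\}=\{(C_0^{-1}q,0,\dots,0)\}$, as $q\in\mathbb{R}^n_{++}$ and $\d\in\Lambda^{(k-1)}_{n,++}$ were arbitrary.

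The argument is short and I do not expect a serious obstacle; the only points needing a little care are the strict inequality $C_0^{-1}q>0$ (which is precisely what makes $\{x_0\in\mathbb{R}^n_{++}\}$ an honest open neighbourhood of $\y$ on which the complementarity conditions collapse) and the correct use of Lemma \ref{l1} to propagate $x_j=0$ through all $j\in[k]$.
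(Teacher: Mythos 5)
Your proof is correct, and it takes a genuinely different --- and more elementary --- route than the paper's. Both arguments hinge on the same key fact: since $C_0$ is an $M$ matrix, $C_0^{-1}$ is nonnegative and invertible, hence has no zero row, so $q\in\mathbb{R}^n_{++}$ forces $C_0^{-1}q\in\mathbb{R}^n_{++}$. You exploit this to show that the trivial solution is \emph{isolated}: on the open set $U=\{\x : x_0\in\mathbb{R}^n_{++}\}$ the complementarity conditions collapse via Lemma \ref{l1} and Proposition \ref{star}, giving $\text{SOL}({\bf C},\d,q)\cap U=\{(C_0^{-1}q,0,\dots,0)\}$, and then the standard clopen characterization of connectedness finishes the argument. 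The paper instead goes through path-connectedness: it observes that $\text{SOL}({\bf C},\d,q)$ is semi-algebraic (Lemma \ref{CC}), invokes the theorem that connected semi-algebraic sets are path-connected (Theorem \ref{sctd}), takes a path $\gamma$ from the trivial solution to a putative second solution with $\gamma(t)\neq\x$ for $t>0$, extracts a sequence $t_m\downarrow 0$ along which some component $(\gamma^{j}(t_m))_i$ is nonzero, and derives a contradiction by passing to the limit in the complementarity conditions (the case $j=1$ contradicting $C_0^{-1}q>0$, the cases $2\le j\le k$ contradicting $d_{j-1}>0$). Your route buys simplicity: it dispenses with the semi-algebraic machinery entirely, needs only connectedness rather than path-connectedness, and replaces the paper's case analysis on $j$ with a single application of Lemma \ref{l1}. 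The one hypothesis-specific point needing care --- that the paper's definition of $M$ matrix ($Z$ matrix with $A^{-1}(\mathbb{R}^n_+)\subseteq\mathbb{R}^n_+$) really does yield $C_0^{-1}\geq 0$ entrywise and hence $C_0^{-1}q>0$ --- you handle correctly.
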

\begin{proof}
Let $q\in\Rr_{++}$ and $\d=(d_{1}, d_{2},...,d_{k-1})\in\Lambda^{(k-1)}_{n,++}$.  It can be seen from the proof of Theorem \ref{smq} that $\x=(C_0^{-1}q,0,...,0)\in\text{SOL}({\bf C},\d,q)$. We now show that $\x$ is the only solution to EHLCP(${\bf C},\d,q$).

Assume contrary. Suppose $\y$  is another solution to EHLCP(${\bf C},\d,q$). As $\text{SOL}({\bf C},\d,q)$ is connected, by Lemma \ref{CC} and Theorem \ref{sctd}, it is path-connected. So, there exists a path $\gamma=(\gamma^{0},\gamma^{1},...,\gamma^{k}):[0,1]\rightarrow \text{SOL}({\bf C},\d,q)$ such that  $$\gamma(0)=\x, ~\gamma(1)=\y~ \text{and}~ \gamma(t)\neq \x ~\forall t>0.$$ 
Let $\{t_m \} \subseteq(0,1)$ be a sequence such that ${t_m}\to 0$ as $m\rightarrow \infty$. Then, by the continuity of $\gamma$, $\gamma(t_m)\rightarrow \gamma(0)=\x$ as $m\to \infty$. Since $\big(\gamma^{0}(t_m),\gamma^{1}(t_m),...\gamma^{k}(t_m)\big)\in\text{SOL}({\bf C},\d,q),$ \begin{equation*}\begin{aligned}
		C_0\gamma^{0}(t_m) &=q+\sum_{i=1}^{k} C_i\gamma^{i}(t_m),\\ 
		\gamma^{0}(t_m)\wedge\gamma^{1}(t_m)=0& ~\text{and}~\big(d_{j}-\gamma^{j}(t_m)\big)\wedge
		\gamma^{({j+1})}(t_m) =0~\forall j\in[k-1].\\	\end{aligned}
\end{equation*}
Now we claim  that there exists a subsequence \{$t_{m_l}\}$ of $\{t_{m}\}$  such that $$\big(\gamma^{j}(t_{m_l})\big)_i\neq 0 ,\text{ for some }j\in[k] \text{ and for some } i\in[n].$$ Suppose the claim is not true. This means that for given any subsequence $\{t_{ml}\}$ of $\{t_m\}$, there exists $m_0\in \mathbb{N}$ such that for all $m_l \geq m_0$, we have  $$\big(\gamma^{j}(t_{m_l})\big)_i =0~~\forall i\in [n]~~\forall j\in [k].$$ So, $\gamma^{j} (t_m)$ is an eventually zero sequence for all  $j\in[k]$. This implies that there exists a natural number $m_0$ such that $$\gamma^{1}(t_m)=\gamma^{2}(t_m)=...=\gamma^{k}(t_m)=0~~\forall m\geq m_0.$$As $\big(\gamma^{0}(t_m),\gamma^{1}(t_m),...\gamma^{k}(t_m)\big)\in\text{SOL}({\bf C},\d,q)$, we get $\gamma^{0}(t_m)=C^{-1}_0 (q)~~\forall m \geq m_0$. This gives us that $\gamma(t_m)=\x$ for all $m\geq m_0$ which contradicts the fact that  $\gamma(t_m)\neq\x$ for all $m$. Therefore, our claim is true.
No loss of generality, we assume a sequence $\{t_m\}$ itself satisfies the condition $$\big(\gamma^{j}(t_{m})\big)_i\neq 0 ,\text{ for some }j\in[k] \text{ and for some } i\in[n].$$
We now consider the following cases for possibilities of $j$.

{\it Case 1} : If $j=1$, then $(\gamma^{0}(t_{m}))_i(\gamma^{1}(t_{m}))_i=0$ which leads to $(\gamma^{0}(t_{m}))_i=0.$ This implies that $$0=\displaystyle{\lim_{m \to \infty} \gamma^{0}(t_{m})_{i}=(C_0^{-1}q})_i.$$  But $(C_0^{-1}q)>0$ as $C_0$ is a $M$ matrix. This is not possible. So, $j\neq 1$. 

{\it Case 2} : If $2\leq j\leq k $, then we have $(d_{j-1}-\gamma^{j-1}(t_{m}))_i
(\gamma^{j}(t_{m}))_i=0$ which gives that $(d_{j-1}-\gamma^{j-1}(t_{m}))_i=0.$ By taking limit $m\rightarrow\infty$, 
$$0=\lim_{m\rightarrow\infty}(d_{j-1}-\gamma^{j-1}(t_{m}))_i=(d_{j-1})_i-(\gamma^{j-1}(0))_i= (d_{j-1})_i>0.$$ This is not possible. 

From both cases, there is no such a $j$ exists. This contradicts the fact. Hence $\x=(C_0^{-1}q,0,...,0)$ is the only solution to $\text{EHLCP}({\bf C},\d,q)$. 
\end{proof}
The following result gives a sufficient condition for a connected solution to EHLCP.
\begin{theorem}
Let  ${\bf C}:=(C_0,C_1,...,C_k)\in \Lambda^{(k+1)}_{n \times n}$ has the column $W_0$-property. If $\text{\rm SOL}({\bf C},\d,q)$ has a bounded connected component, then $\text{\rm SOL}({\bf C},\d,q)$ is connected.
\end{theorem}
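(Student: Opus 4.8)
The plan is to argue by contradiction, combining the standard $W_0$-perturbation trick with a degree-theoretic localization around the bounded component.

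By Lemma~\ref{CC} and Theorem~\ref{sctd}, $S:=\text{SOL}({\bf C},\d,q)$ is connected if and only if it is path-connected. Moreover $S$ is carved out of $\Lambda^{(k+1)}_n$ by finitely many linear equalities together with finitely many componentwise sign/complementarity alternatives (Proposition~\ref{star}), so $S$ is a finite union of polyhedra; in particular it has finitely many connected components, each of them closed. Suppose $S$ is disconnected, let $K$ be the given bounded component, $K':=S\setminus K$ (closed, nonempty), and $\delta:=\text{dist}(K,K')>0$ (positive because $K$ is compact). Put $\Omega:=\{\x\in\Lambda^{(k+1)}_n:\text{dist}(\x,K)<\delta/2\}$, a bounded open set with $\bar\Omega\cap S=K$; do the same around every bounded component of $S$, choosing the neighbourhoods pairwise disjoint. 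With $F$ the EHLCP map, $\hat q:=(q,0,\dots,0)$ and $F_q:=F-\hat q$, the zero set of $F_q$ is exactly $S$, so ${\bf 0}\notin F_q(\partial\Omega)$ and $\text{deg}(F_q,\Omega,{\bf 0})$ is defined.

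Now use the column $W_0$-property: fix ${\bf N}$ with ${\bf C}_\epsilon:={\bf C}+\epsilon{\bf N}$ having the column $W$-property for all $\epsilon>0$, and let $F_q^\epsilon$ be the corresponding map. On the bounded set $\bar\Omega$ one has $\sup_{\x\in\bar\Omega}\|F_q^\epsilon(\x)-F_q(\x)\|\to0$ as $\epsilon\downarrow0$, so for $\epsilon$ small the nearness property (D3) gives $\text{deg}(F_q,\Omega,{\bf 0})=\text{deg}(F_q^\epsilon,\Omega,{\bf 0})$, and likewise around every bounded component. By Theorem~\ref{P1}(iv), EHLCP$({\bf C}_\epsilon,\d,q)$ has a unique solution $\x_\epsilon$, which is thus the only zero of $F_q^\epsilon$; since the column $W$-property implies the ${\bf SSM}$-$W$ and hence the $R_0$-$W$ property (Theorem~\ref{T4} and the definitions), a homotopy in $q$ (with bounded zero set by Theorem~\ref{R_0} applied to ${\bf C}_\epsilon$) together with the homotopy used in the proof of Theorem~\ref{degg} gives $\text{deg}(F_q^\epsilon,B,{\bf 0})=\text{deg}({\bf C}_\epsilon,{\bf 0})=\sgn\det(C_0+\epsilon N_0)=\pm1$ for every bounded open $B\ni\x_\epsilon$. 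Hence $\text{deg}(F_q,\Omega,{\bf 0})$ is a fixed integer in $\{-1,0,1\}$: it equals $\pm1$ if $\x_\epsilon\in\Omega$ for all small $\epsilon$, and $0$ if $\x_\epsilon\notin\bar\Omega$ for all small $\epsilon$; the same dichotomy holds around every bounded component of $S$.

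The crux is to show that $K$ carries the degree, i.e.\ $\text{deg}(F_q,\Omega,{\bf 0})\neq0$. This reduces to proving that $\{\x_\epsilon\}$ stays bounded as $\epsilon\downarrow0$ and that its cluster set---which is connected, being the $\omega$-limit of the bounded continuous curve $\epsilon\mapsto\x_\epsilon$---lies in $\bar\Omega$, hence in $\bar\Omega\cap S=K$; this is precisely where the $W_0$-structure of ${\bf C}$ (and not merely the $W$-property of the perturbations) is essential, and it is the analogue of the delicate step in the Jones--Gowda treatment of the LCP. Granting it, the theorem follows. A second bounded component would also have to trap $\x_\epsilon$, for small $\epsilon$, in its (disjoint) neighbourhood, which is impossible. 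An unbounded component of $K'$ would, since $S$ is a finite union of polyhedra, supply a nonzero recession direction ${\bf v}$ which---reading off the recession cones of the defining polyhedra as in the argument of Lemma~\ref{l1}---satisfies $C_0v_0=\sum_{i=1}^{k}C_iv_i$ and $v_0\wedge v_j=0$ for all $j\in[k]$, so ${\bf C}$ fails the $R_0$-$W$ property and $S$ is unbounded along ${\bf v}$; a normalized-limit argument as in the proof of Theorem~\ref{R_0} then shows this is incompatible with $\{\x_\epsilon\}$ being bounded. Therefore $K'=\emptyset$ and $S=K$ is connected. The boundedness-and-clustering claim for $\{\x_\epsilon\}$ is the step I expect to be the main obstacle; the rest is bookkeeping with the degree and the polyhedral structure.
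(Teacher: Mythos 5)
There is a genuine gap, and you have located it yourself: your argument needs the unique solution $\x_\epsilon$ of the perturbed problem EHLCP$({\bf C}+\epsilon{\bf N},\d,q)$ (with the \emph{same} $q$) to remain bounded as $\epsilon\downarrow 0$ and to cluster inside $\bar\Omega\cap S=K$, and you offer no proof of either claim. Without it, nothing prevents $\text{deg}(F_q,\Omega,{\bf 0})=0$: the nearness property only transfers the degree from $F_q$ to $F_q^\epsilon$ over the fixed set $\Omega$, and if $\x_\epsilon$ escapes $\Omega$ (or escapes to infinity) for all small $\epsilon$, that common degree is $0$ and no contradiction with the existence of a second component arises. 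The column $W_0$-property gives no a priori bound on $\x_\epsilon$, since ${\bf C}$ itself need not have the $R_0$-$W$ property; consequently the normalized-limit argument you sketch for ruling out unbounded components is circular, as it presupposes the very boundedness of $\{\x_\epsilon\}$ it is meant to help establish.

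The paper closes exactly this hole by perturbing the right-hand side together with the matrices. For a putative second solution $\y\notin\Omega$ it introduces the map $H_2$ associated with EHLCP$({\bf C}+\epsilon{\bf N},\d,q_2)$, where $q_2:=q+\epsilon N_0y_{0}-\sum_{i=1}^{k}\epsilon N_iy_{i}$ is chosen so that $\y$ is still a solution of the perturbed problem, and similarly a map $H_3$ with a point ${\bf z}$ of the bounded component built into the data. Then $H_1,H_2,H_3$ are uniformly close on $\bar\Omega$ for small $\epsilon$, so their degrees over $\Omega$ agree by (D3); $\text{deg}(H_3,\Omega,{\bf 0})\neq0$ because ${\bf z}\in\Omega$ is the unique zero of $H_3$ (column $W$-property together with Theorems \ref{T4} and \ref{degg}); hence $\text{deg}(H_2,\Omega,{\bf 0})\neq0$, so $H_2$ has a zero in $\Omega$, which by uniqueness of the solution of the perturbed problem must be $\y$ --- contradicting $\y\notin\Omega$. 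No control of the perturbed solution of the unshifted problem is ever needed. To salvage your route you would have to prove the localization of $\x_\epsilon$ directly (the analogue of the delicate step in the Jones--Gowda treatment of the LCP), but the $q$-shift makes that unnecessary.
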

\begin{proof} If SOL$({\bf C},\d,q)= \emptyset$, then we have nothing to prove.
Let SOL$({\bf C},\d,q)\neq \emptyset$ and $A$ be a connected component of SOL$({\bf C},\d,q)$. If SOL$({\bf C},\d,q)=A$, then we are done. Suppose SOL$({\bf C},\d,q)\neq A.$ Then there exists $\y=(y_{0},y_{1},..,y_{k})\in \text{SOL}({\bf C},\d,q)\setminus A$.
As $A$ is a bounded connected component of SOL$({\bf C},\d,q)$,  we can find an open bounded set $\Omega \subseteq \Lambda^{(k+1)}_{n}$ which contains $A$ and it does not intersect with other component of $\text{SOL}({\bf C},\d,q)$. Therefore $\y \notin\Omega$ and $\partial{(\Omega)}\cap\text{SOL}({\bf C},\d,q)=\emptyset.$ Since  ${\bf C}$ has the column $W_0$-property, there exists ${\bf N}:=(N_0,N_1,...,N_k)\in \Lambda^{(k+1)}_{n\times n}$ such that  ${\bf C+\epsilon N}:=(C_0+\epsilon N_0,C_1+\epsilon N_1,...,C_k+\epsilon N_k)$ has the column $W$-property for every $\epsilon>0$. 

Let ${\bf z}=(z_{0},z_{1},...,z_{k})\in A$ and  $\epsilon >0$, we define functions $H_1$, $H_2$ and $H_3$ as follows:
$$H_1 (\x)=\begin{bmatrix}
	C_0 x_{0} -\sum_{i=1}^{k} C_ix_{i}-q\\ x_{0}\wedge x_{1}\\ (d_{1}-x_{1})\wedge x_{2}\\ .\\ .\\ .\\ (d_{k-1}-x_{k-1})\wedge x_{k}\\
\end{bmatrix},$$ $$H_2 (\x)=\begin{bmatrix}
	(C_0+\epsilon N_0) x_{0} -\sum_{i=1}^{k} (C_i+\epsilon N_i)x_{i}+(\sum_{i=1}^{k}\epsilon N_iy_{i}-\epsilon N_0y_{0}-q)\\ x_{0}\wedge x_{1}\\ (d_{1}-x_{1})\wedge x_{2}\\ .\\ .\\ .\\ (d_{k-1}-x_{k-1})\wedge x_{k}\\
\end{bmatrix},$$ $$H_3 (\x)=\begin{bmatrix}
	(C_0+\epsilon N_0) x_{0} -\sum_{i=1}^{k} (C_i+\epsilon N_i)x_{i}+(\sum_{i=1}^{k}\epsilon N_iz_{i}-\epsilon N_0z_{0}-q)\\ x_{0}\wedge x_{1}\\ (d_{1}-x_{1})\wedge x_{2}\\ .\\ .\\ .\\ (d_{k-1}-x_{k-1})\wedge x_{k}\\
\end{bmatrix}.$$ By putting $\x=\y$ in $H_2(\x)$, and $\x={\bf z}$ in $H_1(\x)$ and $H_3(\x)$, we get $$H_1({\bf z})=H_2(\y)=H_3({\bf z})=0.$$ For $\epsilon$ is near to zero, deg$(H_1,\Omega,{\bf 0})$= deg$(H_2,\Omega,{\bf 0})$= deg$(H_3,\Omega,{\bf 0})$ due to the nearness property of degree (D3). As ${\bf z} \in \Omega$ is a solution to $H_3 (\x)={\bf 0}$ and ${\bf C+\epsilon N}$ has the column $W$-property, we get deg$(H_3, \Omega, {\bf 0}) \neq 0$  by Theorem \ref{T4} and \ref{degg}. Since deg$(H_2,\Omega,{\bf 0})$= deg$(H_3,\Omega,{\bf 0})$, we have deg$(H_2,\Omega,{\bf 0})\neq 0$. This implies that if we set $ {q_2}:=q+\epsilon N_0 y_{0}-\sum_{i=1}^{k}\epsilon N_i y_{i}$, then EHLCP$({\bf C+\epsilon N},\d,q_2)$ must have a solution in $\Omega$.  As ${\bf C+\epsilon N}$ has the column $W$-property, by Theorem \ref{P1},  EHLCP$({\bf C+\epsilon N},\d,q_2)$ has a unique solution which must be equal to $\y$. So, $\y\in \Omega$. It gives us a contradiction. Hence SOL$({\bf C},\d,q)=A$. Thus SOL$({\bf C},\d,q)$ is connected.  
\end{proof}
\section{Conclusion}
In this paper, we introduced the $R_0$-$W$ property and SSM-$W$ properties and then studied the existence and uniqueness result for EHLCP when the underlying set of matrices has these properties. Last, we gave a necessary and sufficient condition for the connectedness of the solution set of the EHLCP.
\section*{Declaration of Competing Interest} The authors have no competing interests.
\section*{Acknowledgements} The first author is a CSIR-SRF fellow, and he wants to thank the Council of Scientific \& Industrial Research(CSIR) for the financial support.

\end{document}